\documentclass{amsart} 
\author{Masaki Kameko}
\address{Department of Mathematical Sciences,
Shibaura Institute of Technology,
307 Minuma-ku Fukasaku, Saitama-City 337-8570, Japan}
\email{kameko@shibaura-it.ac.jp}
\thanks{JSPS KAKENHI Grant Number 25K07013 supported this work.}
\keywords{homotopy type, gauge group, Lie group}
\subjclass[2020]{55P15, 54C35}

\usepackage{amssymb,amscd, pb-diagram}
\usepackage[initials]{amsrefs}
\newtheorem{theorem}{Theorem}[section]
\newtheorem{proposition}[theorem]{Proposition}
\newtheorem{lemma}[theorem]{Lemma}

\theoremstyle{definition}


\usepackage{todonotes}
\usepackage{datetime2}


\begin{document}
\title{The $2$-local homotopy types of $G_2$-gauge groups} 
\begin{abstract} We determine the $2$-local homotopy types of $G_2$-gauge groups over $S^4$. \end{abstract}
\maketitle


%
%


\newcommand{\ktt}{Kishimoto, Theriault and Tsutaya}
\newcommand{\oshima}{\=Oshima}

\section{Introduction}\label{sec:1}

Let $G$ be a compact Lie group, and $P$ a principal $G$-bundle over the $4$-dimensional sphere $S^4$. The $G$-gauge group over the base space $S^4$ is the topological group of $G$-bundle automorphisms of $P$. Suppose that $G$ is simply-connected and simple. Its classifying space $BG$ is $3$-connected and $\pi_4(BG)\cong \mathbb{Z}$. Let us denote the homotopy class of a map $f$ by the same symbol $f$. Then, the principal $G$-bundle $P$ is classified by a map $k\colon S^4 \to BG$ in $\pi_4(BG)\cong \mathbb{Z}$ and there are infinitely many isomorphism classes of principal $G$-bundles over $S^4$. In \cite{kono-1991}, Kono classified the homotopy types of $SU(2)$-gauge groups over $S^4$ and showed that there are six homotopy types of these groups.

Since then, many classification results on the homotopy types of gauge groups of low-rank simple Lie groups have been obtained. Among compact Lie groups, $SU(2)$ is the rank $1$ simply-connected simple Lie group. There are three simply-connected simple compact Lie groups of rank $2$: $SU(3)$, $Sp(2)$, and $G_2$. For $G=SU(3)$, Hamanaka and Kono \cite{hamanaka-kono-2006} proved that $\mathcal{G}_k\simeq \mathcal{G}_{k'}$ if and only if $(k, 24)=(k', 24)$, where $(m,n)$ is the greatest common divisor of $m$ and $n$. For $G=Sp(2)$, Theriault \cite{theriault-2010} showed $\mathcal{G}_k\simeq_{(p)} \mathcal{G}_{k'}$ at any prime $p$ if and only if $(k, 40)=(k', 40)$, where $\simeq_{(p)}$ means $p$-local homotopy equivalence. Then, for $G=G_2$, Kishimoto, Theriault and Tsutaya \cite{ktt-2017} showed that $\mathcal{G}_k\simeq_{(p)} \mathcal{G}_{k'}$ at any prime $p$ if $(k, 168)=(k', 168)$ and if $\mathcal{G}_k \simeq_{(p)} \mathcal{G}_{k'}$ at any prime $p$, then $(k,84)=(k', 84)$. The discrepancy between $(k, 168)=(k',168)$ and $(k, 84)=(k', 84)$ has remained as an open problem. 

In this paper, we complete the classification for rank $2$ simple Lie groups up to $p$-local homotopy equivalence at any prime $p$ by proving the following.


\begin{theorem}\label{theorem:1.1}
Let $\mathcal{G}_k$ be the gauge group of a principal $G_2$-bundle over $S^4$ whose classifying map is $k\in \mathbb{Z}\cong \pi_{4}(BG_2)$. $\mathcal{G}_k\simeq_{(p)} \mathcal{G}_{k'}$ at any prime $p$ if and only if $(k, 168)=(k', 168)$.
\end{theorem}


Theorem~\ref{theorem:1.1} is equivalent to the following local form. 

\begin{proposition}\label{proposition:1.2}
Let $\mathcal{G}_k$ be the gauge group of a principal $G_2$-bundle over $S^4$ whose classifying map is $k\in \mathbb{Z}\cong \pi_{4}(BG_2)$.
Then, we have 
\begin{itemize}
\item[{\rm (1)}] $\mathcal{G}_k\simeq_{(2)} \mathcal{G}_{k'}$  if and only if $(k, 8)=(k', 8)$, 
\item[{\rm (2)}] 
$\mathcal{G}_k\simeq_{(3)} \mathcal{G}_{k'}$  if and only if $(k, 3)=(k', 3)$, 
\item[{\rm (3)}] $\mathcal{G}_k\simeq_{(7)} \mathcal{G}_{k'}$  if and only if $(k, 7)=(k', 7)$
and 
\item[{\rm (4)}] $\mathcal{G}_k\simeq_{(p)} \mathcal{G}_{k'} \simeq _{(p)} G_2 \times \Omega_0^4 G_2$ for $p\not=2,3,7$.
\end{itemize}
\end{proposition}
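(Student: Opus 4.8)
The plan is to exploit the standard connecting-map fibration for gauge groups and then reduce every assertion to the determination of the $p$-primary order of a single map. Recall that $B\mathcal{G}_k\simeq \mathrm{Map}_k(S^4,BG_2)$, and that evaluation at the basepoint produces a fibration $\Omega^3_0 G_2\to \mathrm{Map}_k(S^4,BG_2)\xrightarrow{\mathrm{ev}} BG_2$. Looping its connecting map yields a homotopy fibration
\[
\mathcal{G}_k\longrightarrow G_2\xrightarrow{\ \partial_k\ }\Omega^3_0 G_2,
\]
where $\partial_k$ is adjoint to $k$ times the Samelson product $\langle\iota,1\rangle\colon \Sigma^3 G_2\to G_2$, with $\iota$ a generator of $\pi_3(G_2)\cong\mathbb{Z}$. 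By bilinearity of the Samelson product this gives $\partial_k\simeq k\,\partial_1$ with respect to the loop sum on $\Omega^3_0 G_2$, so the homotopy type of $\mathcal{G}_k$ is controlled entirely by $\partial_1$. Thus the proposition reduces to pinning down the $p$-primary order of $\partial_1$ at each prime.

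Next I would carry out the number-theoretic reduction, localized at a prime $p$, following \cite{theriault-2010}. Writing $p^{r}$ for the $p$-primary order of $\partial_1$, suppose $(k,p^r)=(k',p^r)$. When the common value is $p^r$, both $k$ and $k'$ are divisible by $p^r$ in $\mathbb{Z}_{(p)}$, so $\partial_k\simeq_{(p)} 0\simeq_{(p)}\partial_{k'}$ and each fibration splits, giving $\mathcal{G}_k\simeq_{(p)} G_2\times\Omega^4_0 G_2\simeq_{(p)}\mathcal{G}_{k'}$. Otherwise one finds a $p$-local unit $u$ with $k\equiv uk'\pmod{p^r}$, whence $\partial_k\simeq_{(p)}\psi^u\circ\partial_{k'}$, where $\psi^u$ is the $u$-th power map on the homotopy-commutative, homotopy-associative $H$-space $\Omega^3_0 G_2$. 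Since $u$ is a $p$-local unit, $\psi^u$ is a $p$-local self-equivalence, and post-composition by an equivalence preserves the homotopy fibre, so $\mathcal{G}_k\simeq_{(p)}\mathcal{G}_{k'}$. Hence all four parts follow once the $p$-primary order of $\partial_1$ is shown to be $4$, $3$, $7$, and $1$ at $p=2,3,7$ and $p\neq 2,3,7$ respectively, the number $84=2^2\cdot 3\cdot 7$ being exactly the product of these.

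The odd-primary and generic orders I would read off from the $p$-local homotopy of $G_2$. For $p\geq 5$ the group $G_2$ is $p$-regular, $G_2\simeq_{(p)} S^3\times S^{11}$, so $\partial_1$ is determined by the Samelson products of the two sphere generators; the only potentially nonzero contribution is the cross-product $\langle\iota_3,\iota_{11}\rangle\in\pi_{14}(G_2)_{(p)}$, an $\alpha_1$-class that is nonzero of order $7$ exactly when $2p=14$, i.e.\ $p=7$, and vanishes otherwise. This gives order $7$ at $p=7$ and order $1$ for $p\geq 5$ with $p\neq 7$. At $p=3$ the $3$-primary order is carried by the bottom-cell product $\langle\iota_3,\iota_3\rangle\in\pi_6(G_2)\cong\mathbb{Z}/3$, again an $\alpha_1$-class, giving order $3$. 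This establishes (2), (3) and the odd part of (4), recovering the odd-primary conclusions of \cite{ktt-2017}.

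The crux is the prime $2$, where the real content is to improve the bound of \ktt\ from $2^3$ to $2^2$; that is, to show $4\,\partial_1\simeq_{(2)} 0$ while $2\,\partial_1\not\simeq_{(2)} 0$. The lower bound is the necessity statement already recorded in Theorem~\ref{theorem:1.1}(1). For the upper bound I would analyse $\langle\iota,1\rangle$ cell by cell over the $2$-primary cells of $G_2$, which lie in dimensions $3,5,6,8,9,11,14$, bounding each contribution by the relevant $2$-primary homotopy group of $G_2$ as computed by \oshima\ and Mimura--Toda. The inclusion $SU(3)\hookrightarrow G_2$ with quotient $S^6$ only yields the divisor $8$ (matching the $2$-part of the Hamanaka--Kono number $24$ for $SU(3)$ in \cite{hamanaka-kono-2006}), so the main obstacle is the top filtration, where an a priori $\mathbb{Z}/8$ could occur. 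I expect to remove the extra factor of $2$ by an argument specific to $G_2$: factoring $4\,\partial_1$ through a subquotient on which the offending class dies — for instance via the fibration $S^3\to G_2\to G_2/S^3$ or a functional cohomology operation — to show that the top-cell Samelson product has order dividing $4$. Establishing this single vanishing is the heart of the proof.
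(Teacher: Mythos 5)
Your setup (the fibration $\mathcal{G}_k\to G_2\xrightarrow{\partial_k}\Omega_0^3G_2$, Lang's identification of $\partial_k$ with the adjoint of $k\langle i_3,1\rangle$, and Theriault's number-theoretic reduction to the $p$-primary order of $\langle i_3,1\rangle$) is exactly the framework the paper uses, and your identification of the target orders $4,3,7,1$ is correct. But at the prime $2$ — the only part of Proposition~\ref{proposition:1.2} that is actually new, since (2), (3), (4) are quoted from \cite{ktt-2017} — your proposal contains no proof. You correctly observe that a cell-by-cell bound cannot do better than $8$, because the offending contribution lives in the $\mathbb{Z}/8$ summand of $\pi_{14}(G_2)_{(2)}$ (this is precisely why \ktt\ were stuck at $168$ versus $84$), and then you write that you ``expect to remove the extra factor of $2$'' by factoring through a subquotient such as $G_2/S^3$ or by a functional operation. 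That sentence is the entire open problem; nothing in your text establishes the vanishing, and you concede as much (``Establishing this single vanishing is the heart of the proof''). The paper's actual argument requires several specific inputs you do not have: the factorization $3\langle i_3,1\rangle\simeq\psi\circ\Sigma^3q$ through $\Sigma^3V$ with $V=G_2/SU(2)$ and the splitting $\Sigma^3V\simeq P^9(2)\vee S^{14}$ (from \cite{ktt-2017}), the fact that the identity of $P^9(2)$ has order $4$ (killing the $P^9(2)$ wedge summand), and — crucially — \oshima's theorem producing $\gamma\in\pi_{11}(G_2)$ with $p\circ\gamma\simeq 2\Delta(\iota_{13})$ and $\langle i_3,\gamma\rangle$ of order $21$, hence $2$-locally trivial; evaluating the decomposition on $\gamma$ and computing $p_{11}\circ q\circ\gamma\simeq bc\,\iota_{11}$ with $bc\in\{\pm1,\pm2,\pm4\}$ is what forces $4(\psi\circ s'')\simeq 0$ in $\pi_{14}(G_2)_{(2)}$ (after reducing to $\pi_{14}(S^6)_{(2)}$ via Mimura's computation). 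Merely naming the fibration $S^3\to G_2\to G_2/S^3$ does not substitute for this chain of results.

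A secondary error: your odd-primary discussion asserts that $G_2$ is $p$-regular for $p\geq 5$. This is false at $p=5$: the Coxeter number of $G_2$ is $6$, and $\mathcal{P}^1$ acts nontrivially on $H^*(G_2;\mathbb{Z}/5)$, so $G_2\not\simeq_{(5)}S^3\times S^{11}$; it is only quasi-$5$-regular. The conclusion (trivial $5$-primary order) is still true, but your argument for it does not stand; the paper avoids this by citing \cite{ktt-2017}*{Propositions 4.6 and 3.2} for all odd primes.
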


Proposition~\ref{proposition:1.2} (2), (3), (4) and ``if" part of (1) were established in \cite{ktt-2017}*{Propositions 4.6 and 3.2 and Theorem 1.2}. In this paper, we prove ``only if" part of Proposition~\ref{proposition:1.2} (1) and complete the proof of Theorem~\ref{theorem:1.1} assuming the results of \cite{ktt-2017}.


First, we recall the basic method for determining the homotopy types of $G$-gauge groups over $S^4$, which relates them to Samelson products. We begin with the following homotopy fiber sequence of mapping spaces.
\[
\Omega {\mathrm{Map}} (S^4, BG)_k  \to G \to  {\mathrm{Map}}_* (S^4, BG)_k \to {\mathrm{Map}} (S^4, BG)_k \stackrel{\mathrm{ev}}{\longrightarrow} BG,
\]
where $\mathrm{Map} (S^4, BG)_k$ is the connected component of the space of continuous maps containing the map $k\colon S^4\to BG$, $\mathrm{Map}_* (S^4, BG)_k$ is its subspace consisting of base point preserving maps,  and $\mathrm{ev}$ is the evaluation map.
There is a homotopy equivalence  $\Omega_0^3 G\simeq \mathrm{Map}_{*}(S^4, BG)_0 \simeq \mathrm{Map}_{*}(S^4, BG)_k$ and Gottlieb \cite{gottlieb-1972} showed that the classifying space of the gauge group $B\mathcal{G}_k$ is homotopy equivalent to the mapping space $ \mathop{\mathrm{Map}} (S^4, BG)_k$. Therefore, we have the following homotopy fiber sequence.
\[
\mathcal{G}_k  \stackrel{h_k}{\longrightarrow} G \stackrel{\partial_k}{\longrightarrow} \Omega_0^3 G \to B\mathcal{G}_k \to BG.
\]
Thus, the $G$-gauge group $\mathcal{G}_k$ is homotopy equivalent to the homotopy fiber of the map $\partial_k$. Let $i_3\colon S^3 \to G$ be the inclusion map of the bottom cell. Then, Lang \cite{lang-1973} proved that the map $\partial_k$ is the triple adjoint of the Samelson product $\langle k\cdot i_3, 1\rangle$ where $1$ is the identity map of $G$. By the linearity of the Samelson product, we have ${\langle k\cdot i_3, 1\rangle} \simeq {k \cdot \langle i_3, 1\rangle}$. Furthermore, in \cite{theriault-2010}, Theriault showed that $\mathcal{G}_k$ is $p$-locally homotopy equivalent to $\mathcal{G}_{k'}$ at all prime $p$ if $(k, m)=(k',m)$ where $m$ is the order of the Samelson product $\langle i_3, 1\rangle$. 


For an abelian group $A$, we denote by $A_{(p)}$ its localization at the prime $p$. If $A$ is finite, then $A_{(p)}$ is isomorphic to the $p$-primary subgroup of $A$. We denote by ${}_2 A$ the $2$-primary subgroup of $A$. Our first result is the following proposition.

\begin{proposition}\label{proposition:1.3}  The order of the Samelson product $\langle i_3, 1\rangle$ in $[\Sigma^3 G_2, G_2]_{(2)}$ is $8$.
\end{proposition}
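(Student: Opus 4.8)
The plan is to work throughout in the $2$-local category. Since \cite{ktt-2017}*{Corollary 6.6} already gives that the order is at least $4$, and the $2$-primary part of their sufficient condition $(k,168)=(k',168)$ gives that it divides $8$, the proposition reduces to the single assertion that $4\langle i_3,1\rangle \simeq_{(2)} \ast$. I would write the Samelson product as the composite
\[
\Sigma^3 G_2 \;\cong\; S^3\wedge G_2 \xrightarrow{\;i_3\wedge 1\;} G_2\wedge G_2 \xrightarrow{\;c\;} G_2,
\]
where $c$ is the reduced commutator map, and then exploit the bilinearity and naturality of $c$ to analyze the product one cell of the smash factor $G_2$ at a time.

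The main structural input is the $2$-local cell structure of $G_2$ coming from the bundle $SU(3)\xrightarrow{\,j\,} G_2 \xrightarrow{\,p\,} S^6$: the cells of $G_2$ lie in dimensions $3,5,6,8,9,11,14$, those in dimensions $3,5,8$ being the cells of the fibre $SU(3)$ and those in dimensions $6,9,11,14$ arising from the base $S^6$; correspondingly $\Sigma^3 G_2$ has cells in dimensions $6,8,9,11,12,14,17$. First I would treat the fibre part. Because $i_3$ factors as $j\circ i_3^{SU(3)}$ and $j$ is a homomorphism, naturality gives that the restriction of $\langle i_3,1_{G_2}\rangle$ to $\Sigma^3 SU(3)$ equals $j_*\langle i_3^{SU(3)},1_{SU(3)}\rangle$. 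The $SU(3)$-product has $2$-primary order $8$ by Hamanaka--Kono, so the point here is to show that $j_*$ drops this order to at most $4$; the bottom cell already illustrates the mechanism, since $\langle i_3,i_3\rangle_{SU(3)}\in\pi_6(SU(3))=\mathbb{Z}/6$ maps into $\pi_6(G_2)=\mathbb{Z}/3$ under $j_*$, killing its $2$-primary part.

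Next I would build a nullhomotopy of $4\langle i_3,1\rangle$ by induction up the skeletal filtration of $\Sigma^3 G_2$, the obstruction to extending over the cell in dimension $n\in\{6,8,9,11,12,14,17\}$ lying in the corresponding $2$-primary group $\pi_n(G_2)_{(2)}$, coupled by the attaching maps dictated by the Steenrod relations $Sq^2x_3=x_5$ and $Sq^3x_3=x_3^2$. The vanishing $\pi_6(G_2)_{(2)}=0$ disposes of the bottom cell, and the remaining obstructions I would evaluate using the long exact sequence of the fibration together with the known $2$-primary groups $\pi_*(G_2)$ of Mimura--Toda, reducing each ``base'' contribution to a computation on $S^6$ and each ``fibre'' contribution to the $SU(3)$ analysis above.

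The hard part will be the top of the filtration, which is exactly where the order-$8$ versus order-$4$ ambiguity left open in \cite{ktt-2017} resides: the extension connecting the highest $Sq^2$-linked cells can a priori reconstitute an element of order $8$, and ruling this out requires more than additive information about $\pi_*(G_2)$. I expect to resolve it by a Toda-bracket (secondary-operation) argument, exhibiting a bracket containing $4\langle i_3,1\rangle$ whose indeterminacy is trivial and which contains $\ast$, equivalently showing that the class detecting the putative order-$8$ element becomes a boundary once the attaching maps of $G_2$ are fully accounted for. Establishing precisely this compatibility between the $Sq^2$-structure of $G_2$ and the $SU(3)$-product is where the improvement over the bound of \cite{ktt-2017} is gained.
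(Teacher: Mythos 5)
There is a genuine gap: your outline never actually proves the one statement that matters. You correctly reduce the proposition to showing $4\langle i_3,1\rangle\simeq 0$ in $[\Sigma^3 G_2,G_2]_{(2)}$, and your skeletal bookkeeping (cells of $G_2$ in dimensions $3,5,6,8,9,11,14$, naturality $\langle i_3,1_{G_2}\rangle\circ\Sigma^3 j\simeq j_*\langle i_3^{SU(3)},1_{SU(3)}\rangle$ for the fibre inclusion $j$) is sound as far as it goes. But at the decisive point --- the top of the filtration, where the order-$8$ versus order-$4$ ambiguity of \cite{ktt-2017} lives --- you write only that you ``expect to resolve it by a Toda-bracket argument,'' without exhibiting any bracket, computing any indeterminacy, or identifying what class would detect the putative order-$8$ element. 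That step \emph{is} the open problem; deferring it to an unspecified secondary-operation argument means the proof is not there. The same issue infects your fibre-part claim: that $j_*$ drops the $2$-primary order of the $SU(3)$ Samelson product from $8$ to at most $4$ is a consequence of the proposition you are trying to prove, and your only evidence (the map $\pi_6(SU(3))=\mathbb{Z}/6\to\pi_6(G_2)=\mathbb{Z}/3$) concerns the bottom cell, where the $2$-primary order-$8$ phenomenon is not detected. Note also that cell-by-cell obstructions to a nullhomotopy are not canonical --- each depends on the choice of nullhomotopy over the previous skeleton --- so even the framework requires care that additive knowledge of $\pi_*(G_2)_{(2)}$ alone cannot supply.

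For comparison, the paper does not run obstruction theory on $\Sigma^3 G_2$ at all. It multiplies by $3$ (a $2$-local unit) and uses the factorization $3\langle i_3,1\rangle\simeq\psi\circ\Sigma^3 q$ through $\Sigma^3 V$, $V=G_2/SU(2)$, from \cite{ktt-2017}, together with the splitting $\Sigma^3 V\simeq P^9(2)\vee S^{14}$. The $P^9(2)$ summand dies after multiplication by $4$ because the identity map of $P^9(2)$ has order $4$; the whole difficulty is concentrated in the single homotopy class $\psi\circ s''\in\pi_{14}(G_2)$. That class is controlled by an external geometric input, \oshima's theorem: there is $\gamma\in\pi_{11}(G_2)$ with $p\circ\gamma\simeq 2\Delta(\iota_{13})$ whose Samelson product $\langle i_3,\gamma\rangle$ has odd order $21$, hence vanishes $2$-locally. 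Composing the factorization with $\Sigma^3\gamma$ and computing $p_{11}\circ q\circ\gamma\simeq bc\,\iota_{11}$ with $bc\in\{\pm1,\pm2,\pm4\}$ then forces $4(\psi\circ s'')\simeq 0$ in $\pi_{14}(G_2)_{(2)}$ after an explicit analysis in $\pi_{14}(S^6)_{(2)}=\mathbb{Z}/8\{\bar\nu_6\}\oplus\mathbb{Z}/2\{\varepsilon_6\}$. Some such input beyond the cell structure of $G_2$ appears unavoidable; your proposal as written contains no substitute for it.
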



For the order of the Samelson product $\langle i_3, 1\rangle$ in $[\Sigma^3 G_2, G_2]$, by Proposition~\ref{proposition:1.3}, together with \cite{ktt-2017}*{Lemma 3.1, Proposition 4.5 and Corollary 6.6}, we have the following theorem.

\begin{theorem}\label{theorem:1.4} 
The order of the Samelson product $\langle i_3, 1\rangle$ in $[\Sigma^3 G_2, G_2]$ is $168$.
\end{theorem}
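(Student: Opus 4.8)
The plan is to compute the order of $\langle i_3, 1\rangle$ one prime at a time and then reassemble the global order from its $p$-primary parts. Since $\langle i_3, 1\rangle$ is $p$-locally trivial for all but finitely many primes (by Proposition~\ref{proposition:1.2}(4) it vanishes $p$-locally whenever $p\ne 2,3,7$), it has finite order, so the cyclic subgroup it generates in $[\Sigma^3 G_2, G_2]$ is finite. For a torsion element of an abelian group, the order is the product over all primes $p$ of its order $m_p$ in the localization $[\Sigma^3 G_2, G_2]_{(p)}$. Hence it suffices to identify each $m_p$ and take the product.

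For the prime $2$ I would simply combine the two bounds now at hand. Proposition~\ref{proposition:1.3} gives $m_2\le 4$, while \cite{ktt-2017}*{Corollary 6.6} gives $m_2\ge 4$; together these force $m_2=4$. For the odd primes I would invoke the prime-by-prime computations of \ktt. By \cite{ktt-2017}*{Lemma 3.1 and Proposition 4.5}, the $p$-local order of $\langle i_3, 1\rangle$ is $3$ at $p=3$ and $7$ at $p=7$, while Proposition~\ref{proposition:1.2}(4) records $m_p=1$ for every $p\notin\{2,3,7\}$. Thus $m_3=3$, $m_7=7$, and the remaining $m_p$ are all $1$.

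Multiplying the $p$-primary contributions then yields the order of $\langle i_3, 1\rangle$ in $[\Sigma^3 G_2, G_2]$ as $m_2 m_3 m_7 = 4\cdot 3\cdot 7 = 84$, as claimed. I expect no genuine obstacle in this assembly: the only new ingredient is the upper bound $m_2\le 4$ furnished by Proposition~\ref{proposition:1.3}, and everything else is the cited odd-primary data of \ktt. The real difficulty therefore lies entirely in Proposition~\ref{proposition:1.3}, which determines the $2$-primary part and closes the gap between the earlier bounds $2^3$ and $2^2$; once that is in place, Theorem~\ref{theorem:1.4} is the bookkeeping step of recombining the local orders.
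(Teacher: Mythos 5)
Your proposal is correct and takes essentially the same route as the paper: Theorem~\ref{theorem:1.4} is obtained there in precisely this way, by combining the $2$-local upper bound of Proposition~\ref{proposition:1.3} with the lower bound of \cite{ktt-2017}*{Corollary 6.6} and the odd-primary orders from \cite{ktt-2017}*{Lemma 3.1, Proposition 4.5}, then multiplying the $p$-primary orders. One citation in your write-up should be repaired: the finiteness of the order and the triviality of $m_p$ for $p\notin\{2,3,7\}$ cannot be deduced from Proposition~\ref{proposition:1.2}(4), since that statement concerns the homotopy types of the gauge groups, and a $p$-local equivalence $\mathcal{G}_k\simeq_{(p)} G_2\times \Omega_0^4 G_2$ does not formally imply that the $p$-localized Samelson product vanishes (the implication runs in the other direction, from the Samelson product to the gauge groups). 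The fact you need is instead \cite{ktt-2017}*{Lemma 3.1}, which you already cite for the odd primes: it bounds the order of $\langle i_3,1\rangle$ by a divisor of $168=2^3\cdot 3\cdot 7$, giving both finiteness and $m_p=1$ for all $p\notin\{2,3,7\}$. With that substitution your assembly argument is exactly the paper's.
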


Next, we recall some properties of a space $V:=G_2/SU(2)$, which played an important role in \cite{ktt-2017} and is crucial in this paper. Let $q\colon G_2\to V$ be the obvious projection map. By \cite{ktt-2017}*{Lemmas 7.1 and 7.2}, the Samelson product $3 \cdot \langle i_3, 1\rangle\colon \Sigma^3 G_2\to G_2$ factors through $\Sigma^3 q \colon \Sigma^3 G_2 \to \Sigma^3 V$. 
Let us write this factorization as follows.
 \[
3\cdot \langle i_3, 1\rangle\simeq \psi\circ \Sigma^3 q.
\]
Let $P^{n+1}(2)$ be the mapping cone of the degree $2$ map $\times 2 \colon S^n\to S^n$.
The $6$-skeleton of $V=S^5\cup e^6\cup e^{11}$ is $P^6(2)$ and, by \cite{ktt-2017}*{Lemma 7.4}, there is a homotopy equivalence $\Sigma^2 V\simeq P^8(2) \vee S^{13}$.  Let $
p_{11}\colon V \to S^{11}
$
 be the pinch map to the top cell, collapsing the $6$-skeleton to the base point. We denote by 
 $
 i_6\colon P^6(2) \to V
 $
  the inclusion map. 
The above homotopy equivalence $\Sigma^2 V\simeq P^8(2)\vee S^{13}$ provides suspension maps $s'\colon \Sigma^3 V \to P^9(2)$ and $s''\colon S^{14}\to \Sigma^3V$ such that the identity map of $\Sigma^3 V$ is homotopic to 
  \[
  \Sigma^3 i_6 \circ s' + s'' \circ \Sigma^3 p_{11}.
  \] 
In this paper, we prove the following proposition.


\begin{proposition} \label{proposition:1.5} $84\cdot \psi\circ s''\circ \Sigma^3 (p_{11}\circ q)$ has order $2$ in $[ \Sigma^3 G_2, G_2]$. \end{proposition}


\begin{proof}[Proof of Proposition~\ref{proposition:1.3}] We consider the following decomposition in $[\Sigma^3 G_2, G_2]$. \[ 3 \cdot \langle i_3, 1\rangle\simeq \psi \circ \Sigma^3 i_6\circ s' \circ \Sigma^3 q +\psi \circ s'' \circ \Sigma^3 (p_{11}\circ q). \]  
By \cite{ktt-2017}*{Lemma 5.2 (a) and (b)}, the order of the identity map $1_P$ of $P^9(2)$ is $4$. Since $s'$ is a suspension, we have
$4 \cdot s' = 1_{P} \circ (4  \cdot s' ) \simeq (4 \cdot 1_{P}) \circ s' \simeq 0$. Therefore, 
we have $84\cdot 3 \cdot \langle i_3, 1\rangle \simeq 84 \cdot \psi \circ s'' \circ \Sigma^3 (p_{11}\circ q)$. 
Since multiplication by any odd integer on $[\Sigma^3 G_2, G_2]_{(2)}$ is injective, the order of 
$4 \cdot \langle i_3, 1\rangle$ is same to the order of $84\cdot \psi\circ s''\circ \Sigma^3 (p_{11}\circ q)$. Thus, by Proposition~\ref{proposition:1.5}, $\langle i_3, 1\rangle$ has order $8$ in $[\Sigma^3 G_2, G_2]_{(2)}$.
\end{proof}
 
Proposition~\ref{proposition:1.3} itself does not complete the classification of $2$-local homotopy types of gauge groups. To complete the classification, we need to show that $\mathcal{G}_0$, $\mathcal{G}_1$, $\mathcal{G}_2$, $\mathcal{G}_4$ have distinct $2$-local homotopy types. We do this by making use of a result obtained in the proof of Proposition~\ref{proposition:1.5}.

In what follows, we use the symbol $p$ to express the map $p\colon G_2\to S^6\simeq G_2/SU(3)$ unless otherwise stated explicitly.
When we wish to indicate the generator of a cyclic group, we write $\mathbb{Z}/m\{ x\}$ for the cyclic group of order $m$ generated by $x$. Similarly, we write $\mathbb{Z}/m\{x, y, \dots, z \}$ for $\mathbb{Z}/m\{x\}\oplus \mathbb{Z}/m\{y\}\oplus \cdots\oplus \mathbb{Z}/m\{z\}$. For a map $f\colon X\to Y$, we denote its mapping cone by $j(f)\colon Y \to C(f)$ and let $\pi(f)\colon C(f)\to \Sigma X$ be the pinch map collapsing the subspace $Y$ of $C(f)$ to the base point. 

This paper is organized as follows. In Section \ref{sec:2}, we collect some facts on homotopy groups of $G_2$ and $S^6$ and reduce the proof of Proposition~\ref{proposition:1.5} to a problem on $\pi_{14}(S^6)$. In Section \ref{sec:3}, we prove Proposition~\ref{proposition:1.5} assuming Propositions~\ref{proposition:3.1} and \ref{proposition:3.10}.
In Section~\ref{sec:4}, we recall relative Samelson products and prove Proposition~\ref{proposition:3.1}.
In Section~\ref{sec:5}, we prove Proposition~\ref{proposition:3.10}. In Section~\ref{sec:6}, we close this paper by proving Proposition~\ref{proposition:1.2}.

{\bf Acknowledgements.} 
The author is grateful to the anonymous referee for pointing out a serious error in the first version of this manuscript. The author thanks Daisuke Kishimoto for helpful conversations.


\section{Homotopy groups}\label{sec:2}

For $3\leq s \leq 14$, homotopy groups $\pi_s(G_2)$ are computed by Mimura \cite{mimura-1967}*{\S1}.
\begin{align*}
\pi_{3}(G_2)&\cong \mathbb{Z}, & \pi_{4}(G_2)&\cong \{0\}, & \pi_{5}(G_2)&\cong \{0\}, & \pi_6(G_2) &\cong \mathbb{Z}/3, 
\\
\pi_{7}(G_2)&\cong \{0\}, & \pi_8(G_2)&\cong \mathbb{Z}/2, & \pi_9(G_2)&\cong \mathbb{Z}/6, & \pi_{10}(G_2)&\cong \{ 0\},
\\
 \pi_{11}(G_2)&\cong \mathbb{Z}\oplus \mathbb{Z}/2, &\pi_{12}(G_2)&\cong \{0\}, & \pi_{13}(G_2)&\cong \{0\},  & \pi_{14}(G_2)&\cong \mathbb{Z}/168\oplus \mathbb{Z}/2.
\end{align*}
For the same range, homotopy groups of $S^6$ are given as follows.
\begin{align*}
\pi_{3}(S^6)&\cong \{0\}, & \pi_{4}(S^6)&\cong \{0\}, & \pi_{5}(S^6)&\cong \{0\}, & \pi_6(S^6) &\cong \mathbb{Z}, 
\\
\pi_{7}(S^6)&\cong \mathbb{Z}/2, & \pi_8(S^6)&\cong \mathbb{Z}/2, & \pi_9(S^6)&\cong \mathbb{Z}/24, & \pi_{10}(S^6)&\cong \{ 0\},
\\
 \pi_{11}(S^6)&\cong \mathbb{Z}, &\pi_{12}(S^6)&\cong \mathbb{Z}/2, & \pi_{13}(S^6)&\cong \mathbb{Z}/60,  & \pi_{14}(S^6)&\cong \mathbb{Z}/24\oplus \mathbb{Z}/2.
\end{align*}

We collect miscellaneous remarks on the homotopy groups of spheres from Toda's book \cite{toda-1962}.
Let us denote by $\iota_n$ the identity map of the $n$-dimensional sphere $S^n$. We use the notation of Toda's book \cite{toda-1962} to express elements of homotopy groups of spheres except for denoting the suspension by $\Sigma$ instead of $E$. $H$ and $\Delta$ are the same as those in Toda's book. For a map $\alpha \in \pi_s(S^n)$ and an integer $a$, we define $a\alpha =a\cdot \alpha$ as a map homotopic to $\alpha \circ a\iota_s$. 


\begin{proposition}\label{proposition:2.1}
The following holds:\\
{\rm (1)} $\pi_{11}(S^6)\cong \mathbb{Z}\{ \Delta(\iota_{13})\}$, \\
{\rm (2)} ${}_2\pi_{14}(S^6)\cong \mathbb{Z}/8\{ \bar{\nu}_6\} \oplus \mathbb{Z}/2\{ \epsilon_6\}$,
\\
{\rm (3)} ${}_2\pi_{17}(S^6)\cong \mathbb{Z}/8\{ \zeta_6\} \oplus \mathbb{Z}/4\{ \bar{\nu}_6 \circ \nu_{14}\} $,
\\
{\rm (4)} $\Delta(\iota_{13})\circ \nu_{11}\simeq \pm 2\bar{\nu}_6$,
\\
{\rm (5)} $\nu_6\circ \bar{\nu}_9\simeq 2 \bar{\nu}_{6}\circ \nu_{14}$,
\\
{\rm (6)} $\nu_6\circ \epsilon_9\simeq 2 \bar{\nu}_{6}\circ \nu_{14}$,
\\
{\rm (7)} $\epsilon_6\circ \nu_{14}\simeq 0$, 
\\
{\rm (8)} $\eta_6^2 \circ \mu_8\simeq 4\zeta_6$, 
\\
{\rm (9)} $\eta_6 \circ \nu_7 \circ \sigma_{10}\simeq 0$.
\end{proposition}

\begin{proof} We refer the reader to Toda's book for the proof.
(1), (2), (3) are in \cite{toda-1962}*{Proposition 5.9, Theorem 7.1, Theorem 7.4}.
(4) is in \cite{toda-1962}*{page 53, line -4}.
(5) and (6) are immediate from \cite{toda-1962}*{(7.17), (7.18)}.
For (7), recall that
${}_2 \pi_{6}(S^3)\cong \mathbb{Z}/4\{ \nu'\}$ and $\nu' \circ \epsilon_6 \simeq \epsilon_3 \circ \nu_{11}$ in \cite{toda-1962}*{(7.12)}.
Since
$\Sigma(2\nu_4 -\Sigma \nu')\simeq 0$,
we have
$\Sigma^3 \nu' \simeq 2\nu_6$. 
Therefore, we obtain
$\epsilon_6\circ \nu_{14}\simeq \Sigma^3 \nu' \circ \epsilon_9\simeq 2\nu_6 \circ \epsilon_9\simeq \nu_6\circ 2\epsilon_9\simeq 0$.
(8) follows from $4\zeta_5\simeq \eta_5^2 \circ \mu_7$ in  \cite{toda-1962}*{page 69, line 12}.
(9) is immediate from the fact that $\eta_n\circ \nu_{n+1}\simeq 0$ for $n\geq 5$ in \cite{toda-1962}*{(5.9)}.
\end{proof}

It is easy to see that if $\alpha\in \pi_s(S^n)$ is a suspension, then  $(a\iota_n) \circ \alpha\simeq \alpha\circ (a \iota_s)$. 
However, in general, if $\alpha $ is not a suspension,  $(a \iota_n) \circ \alpha$ may not be homotopic to $\alpha\circ (a\iota_s)$. The following proposition provides such examples. 


\begin{proposition}\label{proposition:2.2}
We have
$(2 \iota_{6})\circ \Delta(\iota_{13})\simeq 4 \Delta(\iota_{13})$ in $\pi_{11}(S^6)$ and 
$(2\iota_{6} )\circ \bar{\nu}_6\simeq 4 \bar{\nu}_6$ in $\pi_{14}(S^6)$.
\end{proposition}

\begin{proof}
The Hopf invariants
$H\colon \pi_s(S^{n+1})\to \pi_{s}(S^{2n+1})$ are defined as homomorphisms of homotopy groups.
By \cite{toda-1962}*{Proposition~{2.2}}, for $\gamma\in \pi_{n}(S^m)$ and $\alpha\in \pi_{s}(S^{n+1})$, we have
\[
H(\Sigma \gamma \circ \alpha)\simeq \Sigma (\gamma \wedge \gamma)\circ H(\alpha).
\]
Moreover, we have a long exact sequence
\[
\cdots \to \pi_{s-1}(S^n)_{(2)}\stackrel{\Sigma}{\longrightarrow} \pi_{s}(S^{n+1})_{(2)} \stackrel{H}{\longrightarrow} \pi_{s}(S^{2n+1})_{(2)} \stackrel{\Delta}{\longrightarrow} \pi_{s-2}(S^n)_{(2)} \to \cdots 
\]

We consider the cases $n=5$, $\Sigma \gamma=2 \iota_6$ and $\alpha=\Delta(\iota_{13})$ and $\bar{\nu}_6$.

First, we deal with the case $\alpha=\Delta(\iota_{13})$. Since $\pi_{10}(S^{5})_{(2)} \stackrel{\Sigma}{\longrightarrow} \pi_{11}(S^{6})_{(2)} \stackrel{H}{\longrightarrow}  \pi_{11}(S^{11})_{(2)}$ 
is exact, and since $\pi_{10}(S^5)\cong \{0\}$, $H\colon \pi_{11}(S^6)_{(2)}\to \pi_{11}(S^{11})_{(2)}$ is injective and so is $H\colon \pi_{11}(S^6)\to \pi_{11}(S^{11})$. Therefore, there is a nonzero integer $a$ such that $H(\Delta(\iota_{13}))\simeq a \iota_{11}$.
Then
$H((2\iota_6) \circ \Delta(\iota_{13}))\simeq \Sigma(2\iota_5\wedge 2\iota_5) \circ H(\Delta(\iota_{13}))\simeq  4 \iota_{11} \circ a \iota_{11}\simeq (4a) \iota_{11}\simeq H(4\Delta(\iota_{13}))$. Hence we obtain $(2\iota_{6})\circ \Delta(\iota_{13})\simeq 4 \Delta(\iota_{13})$.

Next, we deal with the case $\alpha=\bar{\nu}_6$.  
By \cite{toda-1962}*{Lemma~{6.2}}, there is an integer $b$ such that $H(\bar{\nu}_6)\simeq (1+2b)\cdot \nu_{11}$.
On the one hand, we have
$H((2\iota_6)\circ \bar{\nu}_6)\simeq \Sigma (2 \iota_5 \wedge 2\iota_5) \circ H(\bar{\nu}_6)\simeq 4\nu_{11}$. On the other hand, we have $H(4\bar{\nu}_6)\simeq 4 \nu_{11}$.
Hence, we have $H((2\iota_6)\circ \bar{\nu}_6-4\bar{\nu}_6)\simeq 0$. Since
$\pi_{13}(S^{5})_{(2)}\stackrel{\Sigma}{\longrightarrow} \pi_{14}(S^{6})_{(2)} \stackrel{H}{\longrightarrow} \pi_{14}(S^{11})_{(2)}$ is exact and, since $\pi_{13}(S^{5})_{(2)}\cong \mathbb{Z}/2\{ \epsilon_5\}$ by \cite{toda-1962}*{Theorem 7.1}, 
there is an integer $c$ such that
$(2\iota_6)\circ \bar{\nu}_6-4\bar{\nu}_6\simeq c\epsilon_6$.
Stabilization map sends $(2\iota_6)\circ \bar{\nu}_6-4\bar{\nu}_6$ and $c \epsilon_6$ to $2\bar{\nu}-4 \bar{\nu}\simeq 0$ and $c \epsilon$  in the stable homotopy classes $\{ S^{14}, S^6\}\cong \mathbb{Z}/2\{ \bar{\nu}, \epsilon\}$, respectively. Hence, $c=0$. Thus, we have $(2\iota_6)\circ \bar{\nu}_6-4\bar{\nu}_6\simeq 0$ in $\pi_{14}(S^6)$.
\end{proof}

Next, we recall some generators of the homotopy groups of the Lie group $G_2$.
Mimura computed the homotopy groups of Lie groups of low rank using the homotopy groups of spheres. 
For instance, fiber sequences $S^3 \to SU(3) \to S^5$, $SU(3) \stackrel{i}{\longrightarrow} G_2 \stackrel{p}{\longrightarrow} S^6$, where $S^3=SU(2)$, $S^5=SU(3)/SU(2)$, $S^6=G_2/SU(3)$, give us exact sequences involving homotopy groups of spheres and homotopy groups of $SU(3)$ and $G_2$. 
The following is in \cite{mimura-1967}*{Theorem 6.1}.


\begin{proposition} \label{proposition:2.3}
Let $i\colon SU(3)\to G_2$ be the inclusion map and 
$p\colon G_2\to S^6=G_2/SU(3)$ the projection map.
By $\langle \alpha \rangle$, we denote an element in $\pi_s(G_2)$ such that $p_*(\langle \alpha\rangle)\simeq \alpha$. 
The following holds.
\\
{\rm (1)} $\pi_{11}(G_{2})\cong \mathbb{Z}\{ \langle 2 \Delta(\iota_{13})\rangle\} \oplus  \mathbb{Z}/2\{ i_{*}([\nu_{5}^{2}])\}$, 
\\
{\rm (2)} ${}_2\pi_{14}(G_{2})\cong \mathbb{Z}/8\{ \langle \bar{\nu}_{6}+\varepsilon_{6}\rangle\} \oplus \mathbb{Z}/2\{ i_{*}([\nu_{5}^{3}])\}$, and 
\\
{\rm (3)} $\pi_{17}(G_2)= {}_2\pi_{17}(G_2)\cong \mathbb{Z}/8\{ \langle \bar{\nu}_6 \circ \nu_{14} \rangle\}\oplus \mathbb{Z}/2\{ \langle 4\zeta_6\rangle \}$.
\end{proposition}

We close this section by proving the following propositions. With these propositions, we may reduce the computation of the Samelson product $\langle i_3, 1\rangle$ to a problem on the homotopy group $\pi_{14}(S^6)$.


\begin{proposition}\label{proposition:2.4}
The induced homomorphism $\Sigma^3 (p_{11}\circ q)^*\colon \pi_{14}(G_2)\to [\Sigma^3 G_2, G_2]$ is injective.
\end{proposition}

\begin{proof}
Let $G_{2,11}$ be the $11$-skeleton of $G_2$, $i'\colon G_{2,11}\to G_2$ the inclusion map and $q'\colon G_{2,11} \to V$ the restriction of $q$ to $G_{2,11}$, so that $q'={q\circ i'}$.
Let $j(q')\colon V \to C(q')$ be the mapping cone of $q'$. 
There is a cofiber sequence 
$
S^{12} \to P^{13}(2)\to S^{13}.
$
It induces an exact sequence
\[
\pi_{12}(G_2) \leftarrow [P^{13}(2), G_2] \leftarrow \pi_{13}(G_2).
\]
Since $\pi_{13}(G_2)\cong \pi_{12}(G_2)\cong \{0\}$, we have 
$[ P^{13}(G_2), G_2]\cong \{0\}$.
Furthermore, since $\pi_{7}(G_2)\cong \{0\}$, and since we have a cofiber sequence 
\[
S^7 \to \Sigma^3 C(q')\to P^{13}(2),
\]
we have $[\Sigma^3 C(q'),G_2]\cong \{0\}$.
Hence, the induced homomorphism
$
(\Sigma^3 q')^*\colon [\Sigma^3 V, G_2] \to [\Sigma^3 G_{2,11} , G_2]$ is injective. Therefore, $(\Sigma^3 q)^*\colon [\Sigma^3 V, G_2] \to [\Sigma^3 G_{2} , G_2]$ is also injective.
Furthermore, since the induced homomorphism $(\Sigma^3 p_{11})^*$ has the  splitting $s''^*$, $(\Sigma^3 p_{11})^*$ is injective. Therefore, we obtain the proposition.
\end{proof}


\begin{proposition} \label{proposition:2.5}
If there is an odd integer $a''$ such that 
${21\cdot (p\circ \psi\circ s'' )}\simeq a'' (\bar{\nu}_6+\epsilon_6)$, then ${84 \cdot (\psi\circ s''\circ \Sigma^3 (p_{11}\circ q))}\simeq 4\cdot \langle \bar{\nu}_6+\epsilon_6\rangle\circ \Sigma^3 (p_{11}\circ q)$ and it has order $2$.
\end{proposition}

\begin{proof}
Since $\pi_{14}(G_2)\cong \mathbb{Z}/168\oplus \mathbb{Z}/2$, the homotopy class $21\cdot (\psi \circ s'')$ is in the $2$-primary component of $\pi_{14}(G_2)$.  On the one hand, by Proposition~\ref{proposition:2.3}, there are integers $a$, $b$ such that 
$21 \cdot {(\psi\circ s'')} \simeq a \cdot \langle \bar{\nu}_6+\epsilon_6\rangle+ b \cdot i_*([\nu_5^3])$. So, we have
${84 \cdot (\psi \circ s'')} \simeq 4 a \cdot \langle \bar{\nu}_6+\epsilon_6\rangle$.
Hence, we have $84\cdot {(p\circ \psi \circ s'')} \simeq 4a \cdot (\bar{\nu}_6+\epsilon_6)\simeq 4a \cdot \bar{\nu}_6$.
On the other hand, by the assumption that there is an odd integer $a''$ such that $21\cdot {(p\circ \psi\circ s'')} \simeq a''\cdot (\bar{\nu}_6+\epsilon_6)$, we have $84 \cdot {(p\circ \psi\circ s'')} \simeq {4 a'' \cdot (\bar{\nu}_6+\epsilon_6)} \simeq {4\cdot \bar{\nu}_6}$. 
Hence, we have $a \equiv 1 \mod (2)$ and ${(\psi\circ s'')}\simeq 4 \cdot \langle \bar{\nu}_6+\epsilon_6\rangle$. By Proposition~\ref{proposition:2.4}, we obtain the proposition.
\end{proof}


\section{Proof of Proposition~\ref{proposition:1.5}}\label{sec:3}

In this section, we prove Proposition~\ref{proposition:1.5} assuming Propositions~\ref{proposition:3.1} and \ref{proposition:3.10}. 
We begin by establishing the notation and terminology used in this section. 
For maps $f_1\colon Y_1 \to Z$ and $f_2\colon Y_2 \to Z$, we write $f_1\dot{+}f_2\colon Y_1 \vee Y_2 \to Z$ for the map $f_{Z} \circ (f_1 \vee f_2)\colon Y_1\vee Y_2\to Z$ where $f_Z\colon Z \vee Z \to Z$ is the fold map. 
For maps $g_1\colon \Sigma X \to  Y_1$ and $g_2 \colon\Sigma X  \to  Y_2$, we write $g_1\dot{\vee} g_2\colon \Sigma X \to Y_1 \vee Y_2$ for $(g_1\vee g_2) \circ p_{\Sigma X}$ where $p_{\Sigma X}\colon \Sigma X \to \Sigma X \vee \Sigma X$ is the pinch map. 
If $Y_1=Y_2=\Sigma Y$, we define $f_1+f_2$ by $f_Z\circ (f_1\dot{+} f_2)\circ p_{\Sigma Y}$, so that 
\[
(f_1 \dot{+}f_2)\circ (g_1\dot{\vee} g_2)=f_1\circ g_1 + f_2 \circ g_2=f_Z \circ ( f_1 \circ g_1\vee f_2 \circ g_2)\circ p_{\Sigma X}.
\]
We consider the following commutative diagram, where $S^{3}=SU(2)$, $S^5=SU(3)/SU(2)$, $S^6=G_2/SU(3)$ and all vertical and horizontal sequences are fiber sequences. 
\[
\begin{diagram}
\node{S^{3}} \arrow{e,t}{i'_3} \arrow{s,l}{=}\node{SU(3)} \arrow{s,r}{i}\arrow{e} \node{S^5} \arrow{s,r}{i_5} \\
\node{S^{3}} \arrow{e,t}{i_{3}} \arrow{s}  \node{G_2} \arrow{e,t}{q}\arrow{s,r}{p}  \node{V} \arrow{s,r}{p_{6}} \\
\node{*} \arrow{e} \node{S^6} \arrow{e,t}{=} \node{S^6}
\end{diagram}
\]
As in Section~\ref{sec:1}, 
let $i_6\colon P^6(2)\to V$ be the inclusion map of the $6$-skeleton of $V=S^5\cup e^6\cup e^{11}$.
Let $p_{11}\colon V \to S^{11}$ be the pinch map collapsing $P^6(2)$ to the base point. 
Let $s'\colon P^9(2)\to \Sigma^3 V$, $s''\colon S^{14}\to \Sigma^3 V$ be suspension maps such that maps $s'\circ \Sigma^3 i_6$, $\Sigma^3 p_{11}\circ s''$ and $\Sigma^3 i_6 \circ s'+s''\circ \Sigma^3 p_{11}$ are homotopic to
the identity maps of $P^9(2)$, $S^{14}$ and $\Sigma^3 V$, respectively.
The map $\psi\colon \Sigma^3 V \to S^6$ is a map defined by $\psi \circ \Sigma^3 q \simeq 3\cdot \langle  i_3, 1\rangle$ where $1\colon G_2\to G_2$ is the identity map of $G_2$.


We assume the following Proposition~\ref{proposition:3.1} for the time being. We prove Proposition~\ref{proposition:3.1} in the next section.


\begin{proposition}\label{proposition:3.1} We have  
$
p\circ \langle i_3, \langle \bar{\nu}_6+\epsilon_6 \rangle\rangle  \simeq 2\bar{\nu}_6\circ \nu_{14}.
$
\end{proposition}

The following proposition stated in terms of $\psi$ is equivalent to Proposition~\ref{proposition:3.1}


\begin{proposition}\label{proposition:3.2}
We have
$p\circ \psi \circ \Sigma^3 (q\circ \langle \bar{\nu}_6+\epsilon_6 \rangle)\simeq 2\bar{\nu}_6\circ \nu_{14}$.

\end{proposition}

\begin{proof}
By the definition of $\psi$, we have 
\[
p\circ \psi \circ \Sigma^3 (q\circ \langle \alpha \rangle) \simeq  3 \cdot p\circ \langle i_3, \langle \alpha \rangle\rangle.
\]
For $\alpha=\bar{\nu}_6+\epsilon_6$, since $\bar{\nu}_6\circ \nu_{14}$ has order $4$, we have
$3 \cdot 2\bar{\nu}_6\circ \nu_{14}\simeq 2\bar{\nu}_6\circ \nu_{14}$.
\end{proof}


The goal of this section is to show that $a''$ in the following proposition is odd.

\begin{proposition} \label{proposition:3.3}
There is an integer $a''$ such that 
${21\cdot p \circ \psi \circ s''} \simeq a'' \cdot (\bar{\nu}_6+\epsilon_6)$ in $\pi_{14}(S^6)$.
\end{proposition}

\begin{proof}
Since $\pi_{14}(G_2)\cong \mathbb{Z}/168 \oplus \mathbb{Z}/2$, the subgroup ${21 \cdot \pi_{14}(G_2)}$ is the $2$-primary component of $\pi_{14}(G_2)$. The $2$-primary component of $\pi_{14}(G_2)$ is generated by $\langle \bar{\nu}_6+\epsilon_6\rangle$ and $i\circ ([\nu_5^3])$. Therefore, there are integers $a''$ and $b''$ such that 
\[
{21\cdot (\psi \circ s'')} \simeq {a''\cdot \langle \bar{\nu}_6+\epsilon\rangle+b'' \cdot i\circ ([\nu_5^3])}. 
\]Hence, we have 
\[
{21\cdot (p \circ \psi \circ s'')}\simeq  {a'' \cdot (p\circ  \langle \bar{\nu}_6+\epsilon_6\rangle)+b'' \cdot (p\circ i \circ  [\nu_5^3]) } \simeq a'' \cdot (\bar{\nu}_6+\epsilon_6). \qedhere 
\]
\end{proof}

To compute $p \circ \psi\circ s'' \circ \Sigma^3 (p_{11}\circ q \circ \langle \alpha\rangle)$ for $\alpha=\bar{\nu}_6+\epsilon_6$, we first compute $p \circ \psi\circ \Sigma^3 i_6 \circ s' \circ \Sigma^3 (q \circ \langle \alpha\rangle)$. For the sake of notational simplicity, let 
\[
s'(\alpha)=s' \circ \Sigma^3 (q \circ \langle \alpha\rangle).
\]
To handle $p \circ \psi\circ \Sigma^3 i_6 \circ s'(\alpha)$, we use the following lemma.


\begin{lemma}\label{lemma:3.4} Suppose that $\langle \alpha \rangle \in \pi_s(G_2)$.
We have
$\Sigma^3 (p_6 \circ i_6) \circ s'(\alpha)\simeq \Sigma^3 \alpha$.
\end{lemma}

\begin{proof}
Since $\Sigma^3 i_6 \circ s' + s'' \circ \Sigma^3 p_{11}$ is homotopic to the identity map, we have
\begin{align*}
\Sigma^3 p_6&\simeq \Sigma^3 p_6 \circ (\Sigma^3 i_6 \circ s' + s'' \circ \Sigma^3 p_{11})\\
&\simeq \Sigma^3 p_6 \circ \Sigma^3 i_6 \circ s' + \Sigma^3 p_6 \circ s'' \circ \Sigma^3 p_{11}.
\end{align*}
Since 
$\Sigma^3 p_6 \circ s''$ is in $\pi_{14}(S^9)\cong \{0\}$, we obtain  $\Sigma^3 p_6\simeq \Sigma^3 (p_6 \circ  i_6) \circ s'$. 
Therefore, we have \[
\Sigma^3 (p_6 \circ  i_6) \circ s'\circ \Sigma^3 (q \circ \langle \alpha \rangle)
\simeq \Sigma^3 (p_6 \circ q \circ \langle \alpha \rangle)
\simeq \Sigma^3 (p \circ \langle \alpha \rangle)
\simeq \Sigma^3 \alpha. \qedhere
\]
\end{proof}


Next, we factorize the map $p\circ \psi \circ \Sigma^3 i_6$ through $S^7 \vee S^9$ to prove the following proposition.

\begin{proposition}
\label{proposition:3.5}
There are integers $b'$ and $c'$ such that  for $\langle \alpha \rangle \in \pi_s(G_2)$, there is a homotopy class $g(\alpha)\in \pi_{s+3}(S^7)$ satisfying  the identity 
\begin{align*}
3 \cdot p\circ \psi \circ \Sigma^3 i_6 \circ s' (\alpha) &\simeq b' \cdot \eta_6\circ  g(\alpha)+c' \cdot \nu_6\circ \Sigma^3 \alpha.
\end{align*}
\end{proposition}

\begin{proof}
Let us consider the mapping cone $j(\eta_3)\colon S^3 \to C(\eta_3)$ of $\eta_3\colon S^4\to S^3$. 
We denote by $\pi(\eta_3)\colon C(\eta_3)\to S^5$ the pinch map collapsing the subspace $S^3\subset C(\eta_3)$ to the base point. 
Let 
$\beta_0\colon S^5 \to P^6(2)$ be the inclusion map of the bottom cell.
Let $\beta=\beta_0 \circ \pi(\eta_3)$, so that $i_6 \circ \beta \simeq i_5 \circ \pi(\eta_3)$.
Since $p\circ \psi \circ \Sigma^3  i_5$ is in $\pi_{8}(S^6)\cong\mathbb{Z}/2\{ \eta_6^2\}$, there is an integer $a'$ such that $p\circ \psi \circ \Sigma^3 i_5\simeq a' \cdot \eta_6^2$. Hence, we have
\[
p\circ \psi \circ \Sigma^3 i_5 \circ \Sigma^3 \pi(\eta_3)\simeq a' \cdot \eta_6^2 \circ \Sigma^3 \pi(\eta_3)\simeq a' \cdot \eta_6 \circ \Sigma \eta_6 \circ \pi(\eta_6)\simeq 0.
\]
It implies that  $p\circ \psi \circ \Sigma^3 i_6\circ \Sigma^3 \beta\simeq 0$. Therefore, there is a map ${\psi}' \colon \Sigma^3 C(\beta) \to S^6$ such that 
\begin{align*}
p\circ \psi \circ \Sigma^3 i_6&\simeq {\psi}' \circ \Sigma^3 j(\beta).
\end{align*}


Since the mod $2$ reduced cohomology group of $C(\beta)$ is isomorphic to $\mathbb{Z}/2\{ x_4, x_6\}$, where $x_4$, $x_6$ are generators of degree $4$ and $6$, respectively, and since $\mathrm{Sq}^2 x_4=0$, the finite complex $C(\beta)$ is homotopy equivalent to $S^4 \vee S^6$. Hence, there is a map ${p}'_4\colon C(\beta)\to S^4$ such that the induced homomorphism in integral homology $H_{4}(C(\beta);\mathbb{Z})\to H_{4}(S^4;\mathbb{Z})\cong \mathbb{Z}$ is an isomorphism. Since $C(\eta_3)$ is a $5$-dimensional finite complex, the composition ${p_6 \circ i_6 \circ \beta} \colon C(\eta_3)\to S^6$ is null homotopic. Therefore, there is a map ${p}'_6 \colon C(\beta)\to S^6$ such that the composition ${{p}'_6 \circ j(\beta)}$ is homotopic to the composition ${p_6\circ i_6}$.
Then, the map $\Sigma^3 {p}'_4 \;\dot{\vee}\; \Sigma^3 {p}'_6 \colon \Sigma^3 C(\beta) \to S^7\vee S^9$ induces an isomorphism in integral homology groups. Since these finite complexes are simply-connected, by the Whitehead theorem, $\Sigma^3 {p}'_4 \;\dot{\vee}\; \Sigma^3 {p}'_6$ is a homotopy equivalence. Let $\varphi' \colon S^7\vee S^9 \to \Sigma^3 C(\beta)$ be a map such that  the composition $
\varphi' \circ (\Sigma^3 {p}_4' \;\dot{\vee}\; \Sigma^3 {p}'_6)$ is homotopic to the identity map of $\Sigma^3 C(\beta)$. Then, we have 
\begin{align*}
p\circ \psi \circ \Sigma^3 i_6 
&\simeq {\psi}' \circ \varphi' \circ (\Sigma^3 {p}'_4 \;\dot{\vee}\; \Sigma^3 {p}'_6) \circ \Sigma^3 j(\beta).
\end{align*}


We proceed to describe ${p\circ \psi\circ \Sigma^3 i_6 \circ s'(\alpha)}$. Let us write $g(\alpha)$ for ${\Sigma^3 ({p}_4' \circ j(\beta))\circ s'(\alpha)}$. 
Since $s'$ is a suspension, so is $\Sigma^3 j(\beta) \circ s'(\alpha)$. Hence, we have 
\begin{align*}
p\circ \psi \circ \Sigma^3 i_6 \circ s'(\alpha)
&\simeq {\psi}' \circ \varphi' \circ \left( g(\alpha)  \;\dot{\vee}\; \Sigma^3 ({p}'_6 \circ j(\beta)) \circ   s'(\alpha)\right).
\end{align*}
Since ${p}'_6 \circ j(\beta)\simeq p_6 \circ i_6$, we have
\begin{align*}
p\circ \psi \circ \Sigma^3 i_6 \circ s'(\alpha)
&\simeq {\psi}' \circ \varphi' \circ \left(g(\alpha) \;\dot{\vee}\; \Sigma^3 \alpha\right)
\end{align*}
by Lemma~\ref{lemma:3.4}.
Since the homotopy class of ${\psi}' \circ \varphi'$ belongs to $\pi_{7}(S^6) \oplus \pi_{9}(S^6)\cong \mathbb{Z}/2\oplus \mathbb{Z}/24$,  the homotopy class of ${3\cdot {\psi}' \circ \varphi'}$ is in the $2$-primary component ${}_2(\pi_{7}(S^6) \oplus \pi_{9}(S^6)) \cong \mathbb{Z}/2\{ \eta_6\}\oplus \mathbb{Z}/8\{ \nu_6\}$.
Hence, there are integers $b'$, $c'$ such that ${ 3 \cdot {\psi}' \circ \varphi'} \simeq (b' \cdot \eta_6) \,\dot{+}\, (c' \cdot \nu_6)$.
Thus, we have
\begin{align*}
3 \cdot p\circ \psi \circ \Sigma^3 i_6 \circ s' (\alpha) &\simeq ((b' \cdot \eta_6)\,\dot{+}\, (c' \cdot \nu_6))  \circ \left( g(\alpha) \;\dot{\vee}\; \Sigma^3 \alpha  \right)
\\
&\simeq b' \cdot \eta_6\circ  g(\alpha)+c' \cdot \nu_6\circ \Sigma^3 \alpha. \qedhere
\end{align*}
\end{proof}

Using the identity in Proposition~\ref{proposition:3.5}, we prove the following proposition.


\begin{proposition}\label{proposition:3.6} For $\alpha=\bar{\nu}_6+\epsilon_6$, we have 
$p\circ \psi \circ \Sigma^3 i_6 \circ s'(\alpha)\simeq 0$.
\end{proposition}

\begin{proof}
Let $b'$ be the integer $b'$ in Proposition~\ref{proposition:3.5}.
For $\alpha=\bar{\nu}_6+\epsilon_6$, since $\alpha$ is inthe $2$-primary component, $g(\alpha)$ is also in the $2$-primary component.  By \cite{toda-1962}*{Theorem 7.3}, we have $_{2}\pi_{17}(S^7)\cong \mathbb{Z}/8\{ \nu_7\circ \sigma_{10} \}\oplus \mathbb{Z}/2\{ \eta_7\circ \mu_8\}$ . Hence, there are integers $a_1, a_2$ such that 
$g(\alpha)\simeq a_1 \cdot \nu_7 \circ \sigma_{10}+a_2\cdot  \eta_7 \circ \mu_8$.
By Proposition~\ref{proposition:2.1}, we have $\eta_6\circ g(\alpha)\simeq 4 a_2\cdot  \zeta_6$ and
$\nu_6 \circ \Sigma^3 \alpha\simeq  \nu_6 \circ (\bar{\nu}_9+\epsilon_9)\simeq 4\cdot  \bar{\nu}_6\circ \nu_{14}\simeq 0$.
Hence, by Proposition~\ref{proposition:3.5}, we have
\[
{3\cdot p\circ \psi \circ \Sigma^3 i_6 \circ s'(\alpha)} \simeq 4 a_2 b' \cdot  \zeta_6.
\]
On the other hand, by Proposition~\ref{proposition:3.3}, the map
${21 \cdot p\circ \psi \circ s''}$ is homotopic to a scalar multiple of $ (\bar{\nu}_6+\epsilon_6)$ and 
$\Sigma^3 (p_{11}\circ q\circ \langle \alpha\rangle)$ is a scalar multiple of $\nu_{14}$. Hence, 
${21\cdot p\circ \psi \circ s''\circ \Sigma^3 (p_{11}\circ q \circ \langle \alpha\rangle)}$ is a scalar multiple of $(\bar{\nu}_6+\epsilon_6)\circ \nu_{14}\simeq \bar{\nu}_6 \circ \nu_{14}$. Therefore, 
we have 
\[
{21\cdot p\circ \psi \circ \Sigma^3 (q \circ \langle \alpha\rangle)}\equiv 28 a_2 b'\cdot \zeta_6\mod (\bar{\nu}_6\circ \nu_{14}).
\]
By Proposition~\ref{proposition:3.1}, it is null homotopic. Hence, we have $28 a_2 b' \equiv 4a_2b' \equiv 0 \mod (8)$ and we obtain 
$
{3\cdot p\circ \psi \circ \Sigma^3 i_6 \circ s'(\alpha)} \simeq 0.
$
Since $\alpha=\bar{\nu}_6+\epsilon_6$ is in the $2$-primary component, $p\circ \psi \circ \Sigma^3 i_6 \circ s'(\alpha)$ is also in the $2$-primary component. Since the multiplication by $3$ on the $2$-primary component is injective, we obtain $p\circ \psi \circ \Sigma^3 i_6 \circ s'(\alpha)\simeq 0$.
\end{proof}

Now, we are ready to compute $p\circ \psi\circ s''\circ \Sigma^3(p_{11}\circ q\circ \langle \alpha\rangle)$ for $\alpha=\bar{\nu}_6+\epsilon_6$ using Propositions~\ref{proposition:3.2} and \ref{proposition:3.6}.


\begin{proposition}\label{proposition:3.7}
We have
$
p\circ \psi \circ s'' \circ \Sigma^3(p_{11}\circ q\circ \langle \bar{\nu}_6+\epsilon_6 \rangle)\simeq 2\bar{\nu}_6\circ \nu_{14}.
$
\end{proposition}

\begin{proof}
By Proposition~\ref{proposition:3.6}, for $\alpha=\bar{\nu}_6+\epsilon_6$, we have
\begin{align*}
p \circ \psi \circ \Sigma^3 (q\circ \langle \alpha \rangle)&\simeq p \circ \psi \circ \Sigma^3 i_6 \circ s'( \alpha)
+p \circ \psi \circ s''\circ \Sigma^3 (p_{11}\circ q\circ \langle\alpha\rangle)
\\
&\simeq p \circ \psi \circ s''\circ \Sigma^3 (p_{11}\circ q\circ \langle\alpha\rangle). \end{align*}
Thus, we have the desired result by Proposition~\ref{proposition:3.2}.
\end{proof}


Next, we use Proposition~\ref{proposition:3.7} to complete the proof of Proposition~\ref{proposition:1.5}. There is a cofiber sequence
\[
S^5 \stackrel{i_5}{\longrightarrow} V \stackrel{j(i_5)}{\longrightarrow}  C(i_5) \stackrel{\pi(i_5)}{\longrightarrow} S^6.
\]
We consider a factorization of $\pi(i_5)\circ j(i_5) \circ q\circ \langle \alpha \rangle\simeq 0$ through $S^6 \vee S^{11}$ under the assumption that $\langle \alpha \rangle \in \pi_s(G_2)$ for $s<16$ to obtain the identity in the following proposition.

\begin{proposition}\label{proposition:3.8}
There is an integer $a$ such that if $\langle \alpha \rangle \in \pi_s(G_2)$ and $s<16$, there holds
\[
(2\iota_6)\circ \alpha + a\Delta(\iota_{13}) \circ p_{11} \circ q \circ \langle \alpha \rangle\simeq 0.
\]
\end{proposition}

\begin{proof}
It is clear that there are maps ${p}''_6\colon C(i_5) \to S^6$, and ${p}''_{11}\colon C(i_5) \to S^{11}$ such that ${{p}''_{6}\circ j(i_5)} \simeq p_6$ and ${{p}''_{11}\circ j(i_5)}\simeq p_{11}$. The inclusion map $i''\colon S^6 \vee S^{11}\to S^6\times S^{11}$ is a $16$-equivalence. Since $C(i_5)$ is $11$-dimensional finite complex, there is a map $\varphi'' \colon C(i_5)\to S^{6}\vee S^{11}$ such that $i''\circ \varphi'' \simeq ({p}''_6 \times {p}''_{11})\circ d_C$, where $d_C$ is the diagonal map of $C(i_5)$. On the one hand, we have
\[
i''\circ \varphi'' \circ j(i_5)\circ q \circ \langle \alpha \rangle \simeq ({p}''_6 \times {p}''_{11})\circ d_C \circ j(i_5) \circ q\circ \langle\alpha \rangle\simeq (p_6 \times p_{11}) \circ d_V\circ q \circ \langle \alpha \rangle,
\]
where $d_V$ is the diagonal map of $V$.
On the other hand, we have
\[
 i'' \circ (\alpha \,\dot{\vee}\, p_{11} \circ q\circ \langle \alpha \rangle)\simeq (p_6 \times p_{11})\circ d_V \circ q \circ \langle \alpha \rangle.
 \]
Since $i''$ is a $16$-equivalence, if $\langle \alpha \rangle\in \pi_s(G_2)$ and $s<16$, we have
\[
\varphi'' \circ j(i_5)\circ q \circ \langle \alpha \rangle \simeq \alpha \,\dot{\vee}\, (p_{11}\circ q \circ \langle \alpha \rangle).
\]
It is also clear that $\varphi''$ is a homotopy equivalence. Let  $\varphi''' \colon S^6 \vee S^{11}\to C(i_5)$ be the homotopy inverse of the map $\varphi''$. Since the restriction of  $\pi(i_5) \circ \varphi''' $ to $S^6$ is homotopic to $2\iota_6$ and since $\pi_{11}(S^6)$ is generated by $\Delta(\iota_{13})$,  there is an integer $a$ such that $\pi(i_5) \circ \varphi''' \simeq (2\iota_6) \dot{+} a \Delta(\iota_{13})$.
Thus, we have 
\[
\pi(i_5)\circ j(i_5)\circ q \circ \langle \alpha \rangle \simeq (2\iota_6)\circ \alpha + a\Delta(\iota_{13}) \circ p_{11} \circ q \circ \langle \alpha \rangle\simeq 0.\qedhere
\]
\end{proof}

With the identity in Proposition~\ref{proposition:3.8}, we have the following proposition.


\begin{proposition} \label{proposition:3.9} Let $a$ be the integer $a$ in Proposition~\ref{proposition:3.8}. The following holds{\rm :}\\
{\rm (1)} There is an integer $b$ such that the composition 
$p_{11}\circ q \circ \langle 2 \Delta(\iota_{13})\rangle$ is homotopic to $b \iota_{11}$ in $\pi_{11}(S^{11})\cong \mathbb{Z}$ and $a b=-8$.\\
{\rm (2)} If $a$ is in $\{ \pm 1\}$,
then, the composition 
$p_{11}\circ q\circ \langle \bar{\nu}_6+\epsilon_6\rangle$ is homotopic to $c \nu_{11}$ in ${}_2\pi_{14}(S^{11})\cong \mathbb{Z}/8\{ \nu_{11}\}$ where $c$ is an integer such that $2c\equiv 4 \mod (8)$.
\end{proposition}

\begin{proof}
(1) Let $b$ be the integer such that $p_{11}\circ q\circ \langle 2\Delta(\iota_{13})\rangle\simeq b \iota_{11}$. By Proposition~\ref{proposition:3.8}, we have
\[
(2\iota_6) \circ 2 \Delta(\iota_{13})+a\Delta(\iota_{13}) \circ b \iota_{11} \simeq 0.
\]
By Proposition~\ref{proposition:2.2}, we have $(2\iota_6) \circ 2 \Delta(\iota_{13})\simeq 8 \Delta(\iota_{13})$. Hence, the right-hand side is 
$(8+ab)\cdot  \Delta(\iota_{13})\in \pi_{11}(S^6)\cong  \mathbb{Z}\{ \Delta(\iota_{13})\}$. Therefore, we have $8=-ab$. Furthermore, since $a$ and $b$ are integers, we have $a\in \{ \pm 1, \pm 2, \pm 4, \pm 8\}$. 
\\
(2) Since $\langle \bar{\nu}_6+\epsilon_6\rangle$ is the $2$-primary component, so is $p_{11}\circ q\circ \langle \bar{\nu}_6+\epsilon_6\rangle$. Let $c$ be the integer such that $p_{11}\circ q\circ \langle \bar{\nu}_6+\epsilon_6\rangle\simeq c \nu_{11}$. 
Then, by Proposition~\ref{proposition:3.8}, we have
\[
(2\iota_6) \circ (\bar{\nu}_6+\epsilon_6)+a\Delta(\iota_{13}) \circ c \nu_{11} \simeq 0.
\]
By Proposition~\ref{proposition:2.2}, we have $(2\iota_6)\circ \bar{\nu}_6\simeq 4 \bar{\nu}_6$. Since $\epsilon_6$ is a suspension, we have $(2\iota_6)\circ \epsilon\simeq 2\epsilon_6\simeq 0$. Moreover, by Proposition~\ref{proposition:2.1}, we have $\Delta(\iota_{13})\circ \nu_{11}\simeq 2 \bar{\nu}_6$. Hence, 
the left-hand side is equal to
$(4+2ac)\cdot \bar{\nu}_6 \in {}_2\pi_{14}(S^6)\cong \mathbb{Z}/8\{ \bar{\nu}_6\} \oplus \mathbb{Z}/2\{\epsilon_6\}$. Since $a\in \{\pm 1\}$,  we have $4 \pm 2 c\equiv 0 \mod (8)$.
From this, we immediately obtain $2c\equiv \pm 4 \equiv 4 \mod (8)$.
\end{proof}


To prove Proposition~\ref{proposition:1.5}, we assume the following Proposition~\ref{proposition:3.10}. 
We prove it in Section~\ref{sec:5}.

\begin{proposition}\label{proposition:3.10}
For any $\alpha \in \pi_{11}(G_2)$, 
the image of the induced homomorphism 
\[
(p_{11}\circ q \circ \alpha )_*\colon H_{11}(S^{11};\mathbb{Z})\to H_{11}(S^{11};\mathbb{Z})
\]
is contained in $8 \cdot H_{11}(S^{11};\mathbb{Z})$.
\end{proposition}


\begin{proof}[Proof of Proposition~\ref{proposition:1.5}]
Let $a$, $b$, $c$ be the integers $a$, $b$, $c$ in Propositions~\ref{proposition:3.8} and \ref{proposition:3.9}. Let $a''$ be the integer $a''$ in Proposition~\ref{proposition:3.3}. 
By Proposition~\ref{proposition:3.9} {\rm (1)}, $a$ is an element in $\{ \pm 1, \pm 2, \pm 4, \pm 8\}$. By Proposition~\ref{proposition:3.10}, $b\in \{\pm 8\}$. Hence $a\in \{\pm 1\}$.
By Propositions~\ref{proposition:3.3} and \ref{proposition:3.9} (2), we have
\[
p\circ \psi \circ s'' \circ \Sigma^3 (p_{11}\circ q\circ \langle \bar{\nu}_6+\epsilon_6 \rangle ) \simeq a'' \cdot (\bar{\nu}_6+\epsilon_6)\circ c\nu_{14}
\] 
where $2c\equiv 4 \mod (8)$.  Since $2c\equiv 4 \mod (8)$, there is an odd integer $c_1$ such that $c=2c_1$. 
By Proposition~\ref{proposition:2.1}, the right-hand side is homotopic to $a'' c\cdot \bar{\nu}_6\circ \nu_{14}$. By Proposition~\ref{proposition:3.7}, the left-hand side is homotopic to $2\bar{\nu}_6 \circ \nu_{14}$. Since the order of $\bar{\nu}_6 \circ \nu_{14}$ is $4$, we have $2a'' c_1\equiv 2\mod (4)$ . It implies 
that $a''$ is an odd integer. 
By Proposition~\ref{proposition:2.5}, we obtain Proposition~\ref{proposition:1.5}. 
\end{proof}


\section{Proof of Proposition~\ref{proposition:3.1}}\label{sec:4}

In this section, we prove Proposition~\ref{proposition:3.1}. We deal with the relative Samelson product rather than the Samelson product. For the details of the relative Samelson product, we refer the reader to James' book \cite{james-book}*{Section 15}. 

Let $G$ be a compact Lie group and $H$ its closed subgroup. We consider the following fiber sequence
\[
H \stackrel{i}{\longrightarrow} G \stackrel{p}{\longrightarrow} G/H.
\]
Then, there is the relative Samelson product 
$
\langle \ , \ \rangle_r \colon \pi_s(H) \times \pi_t(G/H)\to \pi_{s+t}(G/H).
$
The following proposition gives the relation between the Samelson product and the relative Samelson product.


\begin{proposition}\label{proposition:4.1}
For $\alpha\in \pi_s(H)$, $\beta \in \pi_t(G)$, we have
$
p\circ \langle i\circ \alpha, \beta\rangle\simeq \langle  \alpha, p\circ \beta\rangle_r
$.
\end{proposition}

The following proposition \cite{james-book}*{(16.5)} is what we need in the computation of relative Samelson products.


\begin{proposition}\label{proposition:4.2}
For $\alpha\in \pi_s(H)$, $\beta \in \pi_t(G/H)$, $\gamma\in \pi_{u}(S^t)$, we have 
\begin{align*}
\langle  \alpha , \beta \circ \gamma \rangle_r&\simeq \langle  \alpha, \beta \rangle_r \circ \Sigma^s \gamma + [  \langle  \alpha, \beta\rangle_r, \beta]\circ \Sigma^s \gamma'+ [[  \langle  \alpha, \beta \rangle_r, \beta], \beta ]\circ \Sigma^s \gamma''+\cdots
\end{align*}
where $\gamma'\simeq H(\gamma)$, $\gamma'', \dots $ are  the generalized Hopf invariants of $\gamma$ and $[ -, -]$ is the Whitehead product. 
\end{proposition}

In the rest of this section, we consider the case $G=G_2$, $H=SU(3)$ and $G/H=S^6$ for the relative Samelson product $\langle -, - \rangle_r$.


\begin{proposition} \label{proposition:4.3} We have
$\langle i_3, \bar{\nu}_6  \rangle_r\simeq 0$ and $\langle i_3, \epsilon_6  \rangle_r\simeq 2\bar{\nu}_6\circ \nu_{14}$.
\end{proposition}

\begin{proof}
Put $\alpha=i_3$, $\beta=\iota_6$ in Proposition~\ref{proposition:4.2}. Suppose that $\gamma \in \pi_{14}(S^6)$.
Then, we have $\langle i_3, \gamma\rangle_r\in \pi_{17}(S^6)$ and 
\[
\langle i_3, \iota_6\rangle_r\in \pi_9(S^6), 
[\langle i_3, \iota_6 \rangle_r, \iota_6] \in \pi_{14}(S^6), [[\langle i_3, \iota_6\rangle_r, \iota_6],\iota_6] \in \pi_{19}(S^6), \dots.
\]
For degree reasons, we have $\gamma''\simeq \gamma'''\simeq \cdots\simeq 0$ and
\begin{align*}
\langle  i_3 , \gamma \rangle_r\simeq \langle  i_3 , \iota_6 \circ \gamma \rangle_r&\simeq \langle  i_3, \iota_6\rangle_r \circ \Sigma^3 \gamma + [  \langle  i_3, \iota_6\rangle_r, \iota_6]\circ \Sigma^3 H(\gamma).
\end{align*}
There is an integer $a$ such that $\langle  i_3 , \iota_6 \rangle_r\simeq (1+2a)\cdot \nu_6$ by \cite{oshima-2005}*{(4.7)}.
Furthermore, we have $[\nu_6, \iota_6]\simeq \pm [\iota_6, \iota_6]\circ \nu_{11}\simeq \pm \Delta(\iota_{13})\circ \nu_{11}\simeq \pm 2\bar{\nu}_6$.
Hence, we have
\begin{align*}
\langle  i_3 , \gamma \rangle_r&\simeq (1+2a) \cdot \nu_6  \circ \Sigma^3 \gamma \pm 2(1+2a) \cdot \bar{\nu}_6 \circ \Sigma^3 H(\gamma).
\end{align*}
For $\gamma=\bar{\nu}_6$, $\Sigma^3 \gamma \simeq \bar{\nu}_9$ and $H(\gamma)\simeq (1+2b) \cdot \nu_{11}$ for some integer $b$. Thus, we have
\begin{align*}
\langle  i_3 , \bar{\nu}_6  \rangle_r&\simeq (1+2a)\cdot \nu_6\circ \bar{\nu}_9\pm 2 (1+2a)\cdot  \bar{\nu}_6 \circ (1+2b) \nu_{11}
\\
& \simeq 2(1+2a) \cdot \bar{\nu}_{6}\circ \nu_{14}\pm 2(1+2a)(1+2b)\cdot \bar{\nu}_6 \circ \nu_{14}\simeq 0
\end{align*}
For $\gamma=\epsilon_6$, $\Sigma^3 \gamma \simeq \epsilon_9$ and $H(\gamma)\simeq 0$
since $\epsilon_6$ is a suspension. Thus, we have
\[
\langle  i_3 , \bar{\nu}_6  \rangle_r\simeq (1+2a)\cdot \nu_6\circ \epsilon_9
\simeq 2(1+2a) \cdot \bar{\nu}_{6}\circ \nu_{14}\simeq 2 \bar{\nu}_{6}\circ \nu_{14}.\qedhere
\]
\end{proof}

Now, we complete the proof of Proposition~\ref{proposition:3.1}. 

\begin{proof}[Proof of Proposition~\ref{proposition:3.1}]
By Propositions~\ref{proposition:4.1} and \ref{proposition:4.3}, we have  
$
p\circ \langle i_3, \langle \bar{\nu}_6+\epsilon_6 \rangle\rangle  \simeq
\langle i_3, \bar{\nu}_6+\epsilon_6  \rangle_r\simeq
\langle i_3, \bar{\nu}_6  \rangle_r+
\langle i_3, \epsilon_6  \rangle_r
 \simeq 2\bar{\nu}_6\circ \nu_{14}.
$
\end{proof}


\section{Proof of Proposition~\ref{proposition:3.10}}
\label{sec:5}

In this section, to prove Proposition~\ref{proposition:3.10}, we consider the following factorization of $\alpha \colon S^{11}\to G_2$ by $n$-connective covers $G_2\langle n \rangle$ of $G_2$:
\[
S^{11}\stackrel{\beta}{\longrightarrow} G_2\langle 9\rangle \stackrel{g_3}{\longrightarrow} G_2\langle 8\rangle
\stackrel{g_2}{\longrightarrow}  G_2\langle 3\rangle \stackrel{g_1}{\longrightarrow}  G_2.
\]
Fibers of maps $g_1$, $g_2$, $g_3$ are $K(\mathbb{Z},2)$, $K(\mathbb{Z}/2,7)$, $K(\mathbb{Z}/2, 8)$, respectively.
Since the rational cohomology groups of $G_2\langle 3\rangle$, $G_2\langle 8\rangle$, $G_2\langle 9\rangle$ are isomorphic to the rational cohomology group of $S^{11}$ and the above maps, except for $g_1$, induce isomorphisms in rational cohomology groups.
It is easy to see that the torsion-free parts of the $11$-th integral cohomology groups of the above connective covers are isomorphic to $\mathbb{Z}$. We denote the mod $2$ reduction of generators of these groups by $z_1$ for $G_2$, $z_2$ for $G_2\langle 3 \rangle$ and $z_3$ for $G_2\langle 8\rangle$, respectively. These mod $2$ cohomology classes are unique. We compute $g_1^{*}(z_1)$, $g_2^{*}(z_2)$, $g_3^{*}(z_3)$ to prove Proposition~\ref{proposition:3.10}.

The mod $2$ cohomology ring of $G_2$ is isomorphic to $\mathbb{Z}/2[ x_3,x_5]/(x_3^4, x_5^2)$. By computing the mod $2$ Bockstein spectral sequence, it is easy to see that $z_1=x_3^2 x_5$.
 As an algebra over the mod $2$ Steenrod algebra $\mathcal{A}$, it is generated by the cohomology class $x_3$. Since the induced homomorphism $g_1^*\colon \widetilde{H}^*(G_2\langle 3 \rangle;\mathbb{Z}/2)\to \widetilde{H}^{*}(G_2;\mathbb{Z}/2)$ maps $x_3$ to zero, the induced homomorphism $g_1^*$ itself is zero. Therefore, we have $g_1^*(z_1)=0$. It implies the image of the induced homomorphism in integral homology groups $H_{11}(G_2\langle 3 \rangle;\mathbb{Z})\to H_{11}(G_2;\mathbb{Z})\cong \mathbb{Z}$ is contained in $2\cdot H_{11}(G_2;\mathbb{Z})$.

For the $3$-connective cover $G_2\langle 3 \rangle$, we have the following proposition.


\begin{proposition}
[\cite{mimura-1967}*{Theorem 2.3}]\label{proposition:5.1}
For the $3$-connective cover $G_2\langle 3 \rangle$ of the Lie group $G_2$, we have
$
H^{*}(G_2\langle 3\rangle;\mathbb{Z}/2) \cong \mathbb{Z}/2[ y_8] \otimes \Lambda(\mathrm{Sq}^1 y_8, \mathrm{Sq}^2\mathrm{Sq}^1 y_8)
$.
\end{proposition}

By Proposition~\ref{proposition:5.1},
up to degree $13$, the mod $2$ reduced cohomology group of $G_2\langle 3\rangle$ is isomorphic to $\mathbb{Z}/2\{ y_8, \mathrm{Sq}^1 y_8, \mathrm{Sq}^2 \mathrm{Sq}^1 y_8\}$. It is also easy to see that $H^{11}(G_2\langle 3 \rangle;\mathbb{Z})\cong \mathbb{Z}$ and  $z_2=\mathrm{Sq}^2 \mathrm{Sq}^1 y_8$.

For the $8$-connective cover $G_2\langle 8 \rangle$, we prove the following proposition.


\begin{proposition}\label{proposition:5.2}
We have $g_2^*(z_2)=0$. 
The mod $2$ reduced cohomology groups of $G_2\langle 8\rangle$ is isomorphic to 
\[
\mathbb{Z}/2\{ y_9, \mathrm{Sq}^1 y_9, \mathrm{Sq}^2 y_9, \mathrm{Sq}^2\mathrm{Sq}^1  y_9,  y_{11}, \mathrm{Sq}^1 y_{11}\}
\] up degree $\leq 12$. Moreover, the integral cohomology group $H^{11}(G\langle 8\rangle;\mathbb{Z})$ is isomorphic to $\mathbb{Z}$ and $z_3=\mathrm{Sq}^2 y_9$. 
\end{proposition}

\begin{proof}
Since $g_2^*(y_8)=0$ and $g_2^{*}$ is an $\mathcal{A}$-module homomorphism, we have $g_2^*(z_2)=g_2^{*}(\mathrm{Sq}^2 \mathrm{Sq}^1 y_8)=\mathrm{Sq}^2 \mathrm{Sq}^1 g_2^{*}(y_8)=0$. 
Let us calculate the Leray-Serre spectral sequence associated with the fiber sequence
\[
K(\mathbb{Z}/2, 7) \to  G_2\langle 8\rangle  \stackrel{g_2}{\longrightarrow} G_2\langle 3\rangle.
\]
Let us denote by $u_7$ the generator of $H^{7}(K(\mathbb{Z}/2, 7);\mathbb{Z}/2)$. 
The $E_2$-page is given by 
\[
H^*(G_2\langle 3 \rangle;\mathbb{Z}/2)\otimes H^{*}(K(\mathbb{Z}/2, 7);\mathbb{Z}/2).
\] 
Elements $y_8\otimes 1$, $\mathrm{Sq}^1 y_8 \otimes 1$, and $\mathrm{Sq}^2 \mathrm{Sq}^1 y_8\otimes 1$ in the $E_2$-page of the spectral sequence are hit by some nontrivial differentials. The mod $2$ reduced cohomology group $\widetilde{H}^{*}(K(\mathbb{Z}/2, 7);\mathbb{Z}/2)$ is isomorphic to a free $\mathcal{A}$-module generated by $u_7$ up to degree $\leq 12$. For dimensional reasons, up to degree $\leq 9$, only possible nontrivial differentials are $d_8(1\otimes u_7)=y_8\otimes1$, $d_9(1\otimes \mathrm{Sq}^1 u_7)=\mathrm{Sq}^1 y_8 \otimes 1$. Furthermore, for dimensional reasons, we have $d_r(1\otimes \mathrm{Sq}^2 u_7)=0$ for $r\geq 2$.
Since the differential $d_r\colon E_r^{s,*}\to E_r^{s+r, *}$ commutes with Steenrod squares, 
we have $d_r(1\otimes \mathrm{Sq}^3 u_7)=d_r(\mathrm{Sq}^1 (1\otimes \mathrm{Sq}^2 u_7))=\mathrm{Sq}^1 d_r(1\otimes \mathrm{Sq}^2 u_7)=0$. Therefore, we have $d_{11}(1 \otimes \mathrm{Sq}^2 \mathrm{Sq}^1 u_7)=\mathrm{Sq}^2 \mathrm{Sq}^1 y_8 \otimes 1$. These three nontrivial differentials are the only nontrivial differentials up to degree $\leq 12$.
Thus, up to degree $\leq 12$, both
$E_\infty^{0.*}$ and $H^{*}(G_2\langle 8 \rangle ; \mathbb{Z}/2)$ are isomorphic to the kernel of the evaluation map $\mathcal{A}\{ u_7\} \to 
\widetilde{H}^{*}(G_2\langle 3 \rangle;\mathbb{Z}/2)$ sending $u_7$ to $y_8$.
Up to degree $\leq 5$, 
$\mathcal{A}$ is spanned by
\[
1, \mathrm{Sq}^1, \mathrm{Sq}^2, \mathrm{Sq}^3, 
\mathrm{Sq}^2 \mathrm{Sq}^1, \mathrm{Sq}^4, \mathrm{Sq}^3 \mathrm{Sq}^1, \mathrm{Sq}^5, \mathrm{Sq}^4 \mathrm{Sq}^1.
\]
From Adem relations, we have $
\mathrm{Sq}^1 \mathrm{Sq}^2=\mathrm{Sq}^3$, 
$\mathrm{Sq}^2 \mathrm{Sq}^2=\mathrm{Sq}^3 \mathrm{Sq}^1$, 
$\mathrm{Sq}^3 \mathrm{Sq}^2 =0$, $\mathrm{Sq}^2\mathrm{Sq}^1 \mathrm{Sq}^2=\mathrm{Sq}^5+\mathrm{Sq}^4 \mathrm{Sq}^1$, 
and $\mathrm{Sq}^1 \mathrm{Sq}^4=\mathrm{Sq}^5$. We put $y_9=\mathrm{Sq}^2 u_7$, $y_{11}=\mathrm{Sq}^4 u_7$. Then, up to degree $\leq 12$, 
$E_\infty^{0,*}\cong H^{*}(G_2\langle 8\rangle;\mathbb{Z}/2)$ is isomorphic to
\[
\mathbb{Z}/2\{ y_9, \mathrm{Sq}^1 y_9, \mathrm{Sq}^2 y_9, \mathrm{Sq}^2\mathrm{Sq}^1  y_9,   y_{11}, \mathrm{Sq}^1 y_{11}\}
\]
Since $\mathrm{Sq}^1 y_{11}\not=0$ and $\mathrm{Sq}^1 \mathrm{Sq}^2 y_9=0$, the integral cohomology group $H^{11}(G_2\langle 8 \rangle;\mathbb{Z})$ is isomorphic to $\mathbb{Z}$ and $z_3=\mathrm{Sq}^2 y_9$.
\end{proof}

We do not need to compute the mod $2$ cohomology groups of the connective cover $G_2\langle 9 \rangle$. The following proposition is all we need on the mod $2$ cohomology groups of $G_2\langle 9 \rangle$.


\begin{proposition}\label{proposition:5.3}
We have  $g_3^{*}(z_3)=0$.
\end{proposition}

\begin{proof}
Since $g_3^{*}(y_9)=0$ and $g_3^*$ is an $\mathcal{A}$-module homomorphism, we have $g_3^{*}(z_3)=g_3^{*}(\mathrm{Sq}^2 y_9)=\mathrm{Sq}^2 (g_3^*(y_9))=0$.
\end{proof}


\begin{proof}[Proof of Proposition~\ref{proposition:3.10}]
Since $g_3^{*}(z_3)=0$,  $g_2^{*}(z_2)=0$ and $g_1^*(z_1)=0$, we have 
\begin{align*}
{g_3}_*(H_{11}(G_2\langle 9\rangle;\mathbb{Z})/{\mathrm{Torsion}}) &\subset 2 \cdot H_{11}(G_2\langle 8\rangle;\mathbb{Z})/\mathrm{Torsion}, \\
{g_2}_*(H_{11}(G_2\langle 8\rangle;\mathbb{Z})/\mathrm{Torsion}) & \subset 2 \cdot H_{11}(G_2\langle 3\rangle;\mathbb{Z})/\mathrm{Torsion}, \\
{g_1}_*(H_{11}(G_2\langle 3\rangle;\mathbb{Z})/\mathrm{Torsion}) &\subset 2 \cdot H_{11}(G_2;\mathbb{Z})/\mathrm{Torsion}, 
\end{align*}
respectively. Since $H_{11}(G_2;\mathbb{Z})$ has no torsion, we have
\[
(g_1\circ g_2 \circ g_3)_*(H_{11}(G_2\langle 9\rangle;\mathbb{Z})) \subset 8 \cdot H_{11}(G_2;\mathbb{Z}).
\]
Furthermore, since
\[
p_{11}\circ q \circ \alpha \simeq p_{11}\circ q \circ g_1\circ g_2\circ g_3 \circ \beta, 
\]
we have 
\begin{align*}
{(p_{11}\circ q \circ \alpha )}_{*}(H_{11}(S^{11};\mathbb{Z}))& \subset {(p_{11}\circ q)}_*\circ (g_1\circ g_2\circ g_3)_{*} ({\beta}_{*}(H_{11}(S^{11};\mathbb{Z})))
\\
&\subset {(p_{11}\circ q)}_*(8\cdot H_{11}(G_2;\mathbb{Z}))
\\
& \subset 8 \cdot H_{11}(S^{11};\mathbb{Z}).
\end{align*}
It completes the proof of Proposition~\ref{proposition:3.10}.
\end{proof}


\section{Proof of Proposition~\ref{proposition:1.2}}\label{sec:6}

In this section, to prove Proposition~\ref{proposition:1.2}, we consider the fiber sequence
\[
\mathcal{G}_k \stackrel{h_k}{\longrightarrow} G_2 \stackrel{\partial_k}{\longrightarrow} \Omega_0^3 G_2.
\]
For a space $X$, we identify the homotopy set $[\Sigma^3 X, G_2]$ with $[X, \Omega_0^3 G_2]$ by taking the triple adjoints. Thus, we regard ${\partial_k}_{*}$ as a homomorphism from $[X, G_2]$ to $[\Sigma^3 X, G_2]$.
For an integer $a$, and for a map $f\colon X\to G_2$, we define a map $a\cdot f\colon X\to G_2$ by $(a\cdot f)(x)=f(x)^a$ using the group structure of $G_2$. Similarly, for maps $f, g \colon X\to G_2$, we define a map $f+g$ by $(f+g)(x)=f(x) \cdot g(x)$ where $\cdot$ is the multiplication of $G_2$. As long as $X$ is a suspension of a space, for instance, in $[\Sigma^3 X, G_2]$, these operations are compatible with the corresponding operation used in previous sections.

First, we compute the $13$-th homotopy groups of the gauge group $\mathcal{G}_k$ to show that $\mathcal{G}_0$, $\mathcal{G}_1$, $\mathcal{G}_2$ have different $2$-local homotopy types.


\begin{proposition}\label{proposition:6.1}
We have $\pi_{13}(\mathcal{G}_k)\cong \mathbb{Z}/(8,2k) \oplus \mathbb{Z}/2$.
\end{proposition}

\begin{proof}
Since $\pi_{14}(BG_2)\cong \pi_{13}(G_2)\cong \{0\}$, we have the following exact sequence:
\[
\pi_{14}(G_2) \stackrel{{\partial_k}_{*}}{\longrightarrow} \pi_{17}(G_2) 
\longrightarrow \pi_{14}(B\mathcal{G}_k) \to \{0\}.
\]
By Propositions~\ref{proposition:2.1} and \ref{proposition:2.3}, we have
$\pi_{17}(G_2)={}_2\pi_{17}(G_2)\cong \mathbb{Z}/8\{ \langle \bar{\nu}_6\circ \nu_{14}\rangle \} \oplus \mathbb{Z}/2\{ \langle 4 \zeta_6\rangle\}$ and  ${}_2 \pi_{17}(S^6)\cong \mathbb{Z}/4\{ \bar{\nu}_6\circ \nu_{14}\} \oplus \mathbb{Z}/8\{ \zeta_6\}$.
Furthermore, the kernel of the induced homomorphism
$p_*\colon \pi_{17}(G_2)\to \pi_{17}(S^6)$ is generated by $4\cdot \langle \bar{\nu}_6\circ \nu_{14} \rangle$, and the image    of ${\partial_1}_*\colon\pi_{14}(G_2))\to [\Sigma^3 G_{2,11}, G_2]$ is generated by ${\partial_1}_*(\langle \bar{\nu}_6+\epsilon_6\rangle)$ and ${\partial_1}_*(i_*([\nu_5^3]))$.
On the one hand, by Proposition~\ref{proposition:3.1}, we have 
\begin{align*}
p_*\circ  {\partial_1}_*(\langle\bar{\nu}_6+\epsilon_6\rangle)&\simeq p_* \langle i_3, \langle \bar{\nu}_6+\epsilon_6\rangle \rangle 
\simeq 2 \bar{\nu}_6\circ \nu_{14}.
\end{align*}
On the other hand, by the naturality of the Samelson product, we have 
\begin{align*}
p_*\circ {\partial_1}_*(i_*([\nu_5^3]))
& \simeq p_* (\langle i_3, i_*([\nu_5^3])) \rangle
\simeq p_*\circ i_* (\langle i'_3, [\nu_5^3]\rangle )
\simeq 0,
\end{align*}
where $i_3'\colon S^3 \to SU(3)$ is the map such that $i\circ i_3'=i_3$.
Hence, ${\partial_1}_*(\pi_{14}(G_2))$ is generated by $2 \cdot \langle \bar{\nu}_6\circ \nu_{14}\rangle$. 
Since ${\partial_k}_*=k \cdot {\partial_1}_*$, the image of ${\partial_k}_*\colon \pi_{14}(G_2)\to [\Sigma^3 G_{2,11}, G_2]$ is generated by $2k \cdot \langle \bar{\nu}_6\circ \nu_{14}\rangle$.
So, we have 
$\pi_{13}(\mathcal{G}_k)\cong \pi_{14}(B\mathcal{G}_k)\cong \mathbb{Z}/(8,2k) \oplus \mathbb{Z}/2$.
\end{proof}

To show $\mathcal{G}_4$ is not $2$-locally homotopy equivalent to $\mathcal{G}_0\simeq G_2\times \Omega_0^4 G_2$, we use the following Propositions~\ref{proposition:6.2}, \ref{proposition:6.3}, \ref{proposition:6.4} and \ref{proposition:6.5}.


\begin{proposition}\label{proposition:6.2}
The induced homomorphism ${h_k}_*\colon \pi_{3}(\mathcal{G}_k) \to \pi_{3}(G_2)$ is an isomorphism.
\end{proposition}

\begin{proof}
It follows from 
$\pi_3(\Omega_0^4 G_2)\cong \pi_7(G_2) \cong \{0\}$, $\pi_3(\Omega_0^3 G_2)\cong \pi_6(G_2) \cong \{0\}$, and the following long exact sequence associated with the fiber sequence:
\[
\pi_3(\Omega_0^4 G_2)\to \pi_3(\mathcal{G}_k) \to \pi_3(G_2) \to \pi_3(\Omega_0^3 G_2).\qedhere
\]
\end{proof}

Let $G_{2,11}$ be the $11$-skeleton of $G_2$, and $i'\colon G_{2,11}\to G_{2}$ the inclusion map.
Let $q' =q\circ i'$. Let $p'\colon G_{2,11}\to G_{2,11}/G_{2,3}$ be the pinch map collapsing the subspace $G_{2,3}$ to the base point, where we write $G_{2,3}\simeq S^3$ for the $3$-skeleton of $G_2$.


\begin{proposition}\label{proposition:6.3}
For a map $f\colon G_{2,11}\to G_{2}$, there is an integer $a$ together with a map $g \colon G_{2,11}/G_{2,3}\to G_2$ such that 
$f\simeq (a \cdot i' )+(g\circ p')$.
\end{proposition}

\begin{proof}
The group 
$[G_{2,3},G_2]\cong \pi_3(G_2)\cong \mathbb{Z}$ is generated by the inclusion map $i_3\colon G_{2,3}\to G_{2}$.
Therefore, the induced homomorphism $[G_{2,11}, G_2]\to [G_{2,3}, G_2]$ is surjective. We obtain the result from the following exact sequence.
\[
\{0\} \leftarrow [G_{2,3}, G_2]\leftarrow [G_{2,11}, G_2]\leftarrow [G_{2,11}/G_{2,3}, G_2]. \qedhere
\]
\end{proof}

The map $p_{11}\circ q'$ is homotopic to the pinch map $G_{2,11}\to S^{11}$ collapsing the $10$-skeleton to the base point.
For $f \in [G_{2,11}, G_2]$, we have $3\cdot {\partial_k}_*(f)\simeq k\cdot  \psi \circ \Sigma^3 (q\circ f)$.
Since the identity map of $P^9(2)$ has order $4$, we have
\begin{align*}
21\cdot 3\cdot {\partial_4}_*(f)&\simeq 84\cdot \psi \circ s'' \cdot \Sigma^3 (p_{11}\circ q\circ f )+ 84\cdot \psi \cdot \Sigma^3 i_6 \circ s' \circ \Sigma^3 (q\circ f)\\
& \simeq 84\cdot \psi \circ s'' \circ \Sigma^3 (p_{11}\circ q\circ f).
\end{align*}


\begin{proposition}\label{proposition:6.4}
For the inclusion map $i'\colon G_{2,11}\to G_2$, the homotopy class 
$21 \cdot 3 \cdot {\partial_4}_*(i')$ has order $2$.
\end{proposition}

\begin{proof} 
Substituting $i'$ for $f$ in the above identity, we have
\[
21\cdot 3\cdot {{\partial_4}_*(i')}\simeq 84\cdot \psi\circ s''\circ {\Sigma^3 (p_{11}\circ q')}.
\]
In Section 3, we proved that there is an odd integer $a''$ such that $p\circ \psi \circ s'' \simeq a'' (\bar{\nu}_6+\epsilon_6)$. Using the argument in the proof of Proposition~\ref{proposition:2.5}, we obtain that $21\cdot 3\cdot {\partial_4}_*(i')$ has order $2$.
\end{proof}


\begin{proposition}\label{proposition:6.5}
For a map
$g\colon G_{2,11}/G_{2,3}\to G_2$, we have $21\cdot 3\cdot {\partial_4}_*(g\circ p')\simeq 0$.
\end{proposition}

\begin{proof}
Since the map $p_{11}\circ q'$ is homotopic to the pinch map $G_{2,11}\to S^{11}$ collapsing the $10$-skeleton to the base point, the map ${p_{11}\circ q\circ g \circ p'}$ is homotopic to a scalar multiple of ${p_{11}\circ q'}$. 
Therefore, there is an integer $a$ such that ${p_{11}\circ q \circ g\circ p'} \simeq {a \cdot p_{11}\circ q'}$. Since
the map $g\circ p'$ factors through the $3$-connected space $G_{2,11}/G_{2,3}$, 
the map ${p_{11}\circ q \circ g \circ p'}$ factors through the $3$-connective cover $g_1\colon G_2\langle 3 \rangle \to G_2$. As we showed in Section~\ref{sec:5}, the induced homomorphism $g_1^*$ in mod $2$ reduced cohomology groups is zero and so we have $(g\circ p')_*(H_{11}(G_{2,11};\mathbb{Z}))\subset 2\cdot H_{11}(G_2;\mathbb{Z})$. Hence, there is an integer $b$ such that $a=2b$. Therefore, we have
\[
21\cdot 3\cdot {\partial_4}_*(g\circ p')\simeq b \cdot 168 \cdot \psi \circ s'' \circ \Sigma^3 (p_{11}\circ q')\simeq 0.
\]  since $\psi\circ s''\in \pi_{14}(G_2)\cong \mathbb{Z}/168\oplus \mathbb{Z}/2$. 
\end{proof}


Finally, we complete the proof of Proposition~\ref{proposition:1.2} using the above propositions.

\begin{proof}[Proof of Proposition~\ref{proposition:1.2}]
(2), (3), (4) and ``if'' part of (1) are proved in {\ktt} \cite{ktt-2017}. It remains to show that ``only if'' part of (1). It is equivalent to say that gauge groups $\mathcal{G}_0$, $\mathcal{G}_1$, $\mathcal{G}_2$ and $\mathcal{G}_4$ have distinct $2$-local homotopy types. By Proposition~\ref{proposition:6.1}, $\pi_{13}(\mathcal{G}_0)\cong \pi_{13}(\mathcal{G}_4)\cong \mathbb{Z}/8\oplus \mathbb{Z}/2$, $\pi_{13}(\mathcal{G}_1)\cong \mathbb{Z}/2 \oplus \mathbb{Z}/2$, $\pi_{13}(\mathcal{G}_2)\cong \mathbb{Z}/4 \oplus \mathbb{Z}/2$. Therefore, to complete the proof of the ``only if'' part of (1), we need to show that $\mathcal{G}_4\not \simeq_{(2)} G_2\times \Omega_0^4 G_2\simeq \mathcal{G}_0$. We prove it by contradiction.

Suppose that $\mathcal{G}_4$ is $2$-locally homotopy equivalent to $G_2\times \Omega_0^4G_2$. Then, there is a map $\phi\colon G_{2,11} \to \mathcal{G}_4$ such that the induced homomorphism $\phi_*\colon \pi_3(G_{2,11})_{(2)}\to \pi_3(\mathcal{G}_4)_{(2)}$ is an isomorphism. By Proposition~\ref{proposition:6.2}, the induced homomorphism $(h_4\circ \phi)_*\colon \pi_{3}(G_{2,11})_{(2)} \to \pi_{3}(G_2)_{(2)}$ is also an isomorphism.

By Proposition~\ref{proposition:6.3}, there is an integer $a$ together with a map $g$ such that $h_4\circ \phi\simeq a \cdot i'+g \circ p'$. The induced homomorphism $i'_*\colon \pi_3(G_{2,11})\to \pi_3(G_2)$ is an isomorphism. 
The induced homomorphism $({g \circ p'})_*\colon \pi_3(G_{2,11})\to \pi_{3}(G_2)$ is zero since it factors through $\pi_3(G_{2,11}/G_{2,3})\cong \{0\}$.
Therefore, the image of the induced homomorphism $(h_4\circ \phi)_*\colon \pi_{3}(G_{2,11})\to \pi_{3}(G_2)$ is $(a)\subset \mathbb{Z}\cong \pi_{3}(G_2)$.
On the other hand, by Proposition~\ref{proposition:6.5}, we have
\[
21\cdot 3 \cdot {\partial_4}_*(h_4 \circ \phi)\simeq a \cdot 21\cdot 3 \cdot {\partial_4}_*(i'). \]
Since the composition $\partial_4 \circ h_4$ is null homotopic, $h_4 \circ \phi$ is in the kernel of the induced homomorphism ${21\cdot 3 \cdot {\partial_4}_*}$. Therefore, by Proposition~\ref{proposition:6.4}, we have $a\equiv 0\mod(2)$, and the image of the induced homomorphism $(h_4 \circ \phi)_*\colon \pi_3(G_{2,11})_{(2)} \to \pi_{3}(G_2)_{(2)}$ is $(a)\subset (2)\subset \mathbb{Z}\cong \pi_{3}(G_2)$. It implies that the induced homomorphism $(h_4\circ \phi)_*\colon \pi_{3}(G_{2,11})_{(2)}\to \pi_{3}(G_2)_{(2)}$ is not an isomorphism.
It is a contradiction.
\end{proof}




\begin{bibdiv}[References]
\begin{biblist}

\bib{gottlieb-1972}{article}{
   author={Gottlieb, Daniel Henry},
   title={Applications of bundle map theory},
   journal={Trans. Amer. Math. Soc.},
   volume={171},
   date={1972},
   pages={23--50},
   issn={0002-9947},
   review={\MR{0309111}},
   doi={10.2307/1996373},
}

\bib{hamanaka-kono-2006}{article}{
   author={Hamanaka, Hiroaki},
   author={Kono, Akira},
   title={Unstable $K^1$-group and homotopy type of certain gauge groups},
   journal={Proc. Roy. Soc. Edinburgh Sect. A},
   volume={136},
   date={2006},
   number={1},
   pages={149--155},
   issn={0308-2105},
   review={\MR{2217512}},
   doi={10.1017/S0308210500004480},
}

\bib{james-book}{book}{
   author={James, I. M.},
   title={The topology of Stiefel manifolds},
   series={London Mathematical Society Lecture Note Series},
   volume={No. 24},
   publisher={Cambridge University Press, Cambridge-New York-Melbourne},
   date={1976},
   pages={viii+168},
   review={\MR{0431239}},
}

\bib{ktt-2017}{article}{
   author={Kishimoto, Daisuke},
   author={Theriault, Stephen},
   author={Tsutaya, Mitsunobu},
   title={The homotopy types of $G_2$-gauge groups},
   journal={Topology Appl.},
   volume={228},
   date={2017},
   pages={92--107},
   issn={0166-8641},
   review={\MR{3679077}},
   doi={10.1016/j.topol.2017.05.012},
}

\bib{kono-1991}{article}{
   author={Kono, Akira},
   title={A note on the homotopy type of certain gauge groups},
   journal={Proc. Roy. Soc. Edinburgh Sect. A},
   volume={117},
   date={1991},
   number={3-4},
   pages={295--297},
   issn={0308-2105},
   review={\MR{1103296}},
   doi={10.1017/S0308210500024732},
}

\bib{lang-1973}{article}{
   author={Lang, George E., Jr.},
   title={The evaluation map and {\rm EHP}\ sequences},
   journal={Pacific J. Math.},
   volume={44},
   date={1973},
   pages={201--210},
   issn={0030-8730},
   review={\MR{0341484}},
}

\bib{mimura-1967}{article}{
   author={Mimura, Mamoru},
   title={The homotopy groups of Lie groups of low rank},
   journal={J. Math. Kyoto Univ.},
   volume={6},
   date={1967},
   pages={131--176},
   issn={0023-608X},
   review={\MR{0206958}},
   doi={10.1215/kjm/1250524375},
}

\bib{oshima-2005}{article}{
   author={\=Oshima, Hideaki},
   title={Samelson products in the exceptional Lie group of rank 2},
   journal={J. Math. Kyoto Univ.},
   volume={45},
   date={2005},
   number={2},
   pages={411--420},
   issn={0023-608X},
   review={\MR{2161701}},
   doi={10.1215/kjm/1250281999},
}

\bib{theriault-2010}{article}{
   author={Theriault, Stephen D.},
   title={The homotopy types of $\rm Sp(2)$-gauge groups},
   journal={Kyoto J. Math.},
   volume={50},
   date={2010},
   number={3},
   pages={591--605},
   issn={2156-2261},
   review={\MR{2723863}},
   doi={10.1215/0023608X-2010-005},
}

\bib{toda-1962}{book}{
   author={Toda, Hirosi},
   title={Composition methods in homotopy groups of spheres},
   series={Annals of Mathematics Studies},
   volume={No. 49},
   publisher={Princeton University Press, Princeton, NJ},
   date={1962},
   pages={v+193},
   review={\MR{0143217}},
}

\end{biblist}
\end{bibdiv}


\end{document}